\newcommand{\cref}[1]{\text{\zcref{#1}}} %
\newcommand*{\op}[1]{\operatorname{#1}}
\newcommand*{\m}[1]{\mathcal{#1}}
\newcommand*{\R}{\mathbb{R}}
\newcommand*{\N}{\mathbb{N}}
\newcommand*{\E}{\mathbb{E}}
\newcommand*{\D}{\mathbb{D}}
\newcommand*{\dn}{\mathrm{d}}
\newcommand*{\ds}{\,\mathrm{d}}
\renewcommand*{\d}{\;\mathrm{d}}
\newcommand*{\abs}[1]{\left |#1 \right|}
\newcommand*{\abss}[1]{|#1|}
\newcommand*{\absb}[1]{\big|#1\big|}
\newcommand*{\anon}{\,\cdot\,}
\newcommand*{\h}[1]{\hat{#1}}
\let\bar\smallbar
\renewcommand*{\And}{\quad \text{and} \quad}
\newcommand*{\Lip}{\op{Lip}}
\newcommand*{\Var}{\op{Var}}
\newcommand*{\Cov}{\op{Cov}}
\newcommand*{\Jbar}{\bar{J}}
\newcommand*{\Jtilde}{\widetilde{J}}
\newcommand{\add}[1]{{#1}}
\begin{document}

\section{Introduction}

We study the vector-valued semilinear stochastic heat equation
\begin{equation}
  \label{eq:SHE}
  \dn \add{u}_{t}(x)=\frac{1}{2}\Delta \add{u}_{t}(x)\dn t+\gamma_{\rho} \sigma(\add{u}_{t}) \add{\ds W_{t}^\rho}(x),\qquad t\in\R,x\in\R^{2}
\end{equation}
indexed by a small parameter $0<\rho\ll1$.
The solution $\add{u}_t(x) = \add{u}_t^\rho(x)$ takes values in $\R^{m}$ for some $m\in\N$.
\add{To balance clarity and brevity, we write $\add{u}^\rho$ in result statements, but generally suppress the dependence elsewhere.}

The nonlinearity $\sigma\colon\R^{m}\to\m{H}_{+}^{m}$ is Lipschitz with respect to the Frobenius norm $\abs{\anon}_{\mathrm{F}}$ on the space $\m{H}_{+}^{m}\subset (\R^{m})^{\otimes2}$ of nonnegative-definite symmetric $m\times m$ matrices.
The stochastic forcing $\dn W^\rho$ is \add{a spatial regularization of} an $m$-dimensional spacetime white noise $\dn W$, itself a vector of $m$ i.i.d.\ spacetime white noises.
Let $\m{G}$ denote the heat semigroup:
\begin{equation*}
  \m{G}_{s}f(x) = (G_{s}*f)(x)\qquad\text{for}\qquad G_{s}(x) \coloneqq \frac{1}{2\pi s}\exp\left(-\frac{|x|^{2}}{2s}\right),
\end{equation*}
where $*$ denotes spatial convolution.
\add{Then $\dn W_t^\rho(x) \coloneqq \m{G}_\rho \dn W_t (x) = (G_\rho \ast \dn W_t)(x).$}
We logarithmically attenuate this noise through the factor
\begin{equation*}
  \gamma_{\rho}\coloneqq\left[\frac{4\pi}{\log(1+1/\rho)}\right]^{1/2}.
\end{equation*}

Stochastic heat equations in $2+1$ dimensions have attracted considerable interest due to their critical scaling properties.
This line of inquiry includes both linear \cite{BC98,CSZ17,CSZ21} and semilinear \cite{DG22,Tao22,DG23b} models, and we refer to the introduction of our recent work \cite{DG23b} for further background.

In this note, we explore the fluctuations of \eqref{eq:SHE} in the white-noise limit $\rho \to 0$.
We show that suitably rescaled fluctuations of solutions to \cref{eq:SHE} converge in law to solutions to the Edwards--Wilkinson equation (the additive stochastic heat equation), and in particular are asymptotically Gaussian.
Ran Tao \cite{Tao22} has shown the same for equations satisfying a certain bound on $\Lip(\sigma)$.
The present work relaxes this restriction and clarifies the role of the noise in the limiting process.

In recent work \cite{DG23b}, we considered instead the pointwise statistics of $v$ as $\rho\to0$.
These are generally non-Gaussian and are governed by the forward-backward stochastic differential equation (FBSDE)
\begin{equation}
  \label{eq:FBSDE}
  \dn Z_{q} = \E[\sigma^{2}(Z_{1}) \mid Z_{q}]^{1/2}\dn B_{q}
\end{equation}
introduced in \cite{DG22}, which evolves over scales $q\in[0,1]$.
Here $(\anon)^{1/2}$ denotes the positive-semidefinite matrix square root.
The scale parameter $q$ is related to the time parameter $t$ in \cref{eq:SHE} by $t\approx T-\rho^{q}$ for a final time $T$ of interest.
The FBSDE \cref{eq:FBSDE} is essentially equivalent to the renormalization flow
\begin{equation}
  \label{eq:renorm-flow}
  \partial_{q}H_{q}(b)=\frac{1}{2}[H_{q}(b):\nabla_{b}^{2}]H_{q}(b) = \frac{1}{2} \sum_{k,\ell = 1}^m (H_q)_{k \ell}(b) \frac{\partial^2H_q}{\partial b_k \partial b_\ell}(b),\quad H_{0}(b)=\sigma^{2}(b),
\end{equation}
the deterministic quasilinear heat equation satisfied by the decoupling function
\begin{equation*}
  H_q(b)\coloneqq\E[\sigma^{2}(Z_{1})\mid Z_{1-q}=b].
\end{equation*}
In \cite{DG23b}, $Q_{\mathrm{FBSDE}}(\sigma)$ denotes the maximal $q>0$ up to which $\Lip \sqrt{H_q} < \infty$.
\begin{definition}
  We say $\sigma$ is \emph{\add{$L^2$-}subcritical} if $Q_{\mathrm{FBSDE}}(\sigma) > 1$ and
  \begin{equation*}
    \limsup_{\abs{w} \to \infty} \frac{|\sigma(w)|_{\mathrm{F}}}{\abs{w}} < 1.
  \end{equation*}
\end{definition}
If $\sigma$ is \add{$L^2$-}subcritical, then we can solve \cref{eq:FBSDE} starting at the macroscopic scale $q = 0$ and proceed down to the microscopic scale $q = 1$.
This allows us to resolve all scales using the renormalization flow and thus understand the pointwise statistics of solutions of \eqref{eq:SHE}.
In \cite{DG23b}, we showed that $\sigma$ is \add{$L^2$}-subcritical if $\Lip \sigma <1$.
When $m=1$, we proved less stringent sufficient conditions for subcriticality in \cite{DG23a}.
In contrast, in our normalization, the proof of Edwards--Wilkinson fluctuations in \cite{Tao22} requires the stronger condition $\Lip(\sigma)<\frac1{2\sqrt{6}}<1$.
We note that some subcriticality assumption is necessary, as the linear problem $\sigma(u)=\beta u$ exhibits a phase transition at $\beta= 1$ \cite{BC98,CSZ19,GQT21,CSZ21,Zyg24}.

In the present work, we are concerned with the macroscopic rather than pointwise statistics of the solution $\add{u}$.
To observe nontrivial fluctuations in $\add{u}$, we subtract the mean $\overline{\add{u}}_{t}(x)\coloneqq \m{G}_{t}\add{u}_{0}(x)$ from $\add{u}_{t}$ and rescale by the large factor $\gamma_{\rho}^{-1}$.
To express the limit, we introduce the functions
\begin{equation}
  \Jbar_{1}(b) \coloneqq \E[\sigma(Z_{1}) \mid Z_{0}=b] \And \Jtilde_{1}(b) \coloneqq \Var[\sigma(Z_1) \mid Z_0 = b]^{1/2}.\label{eq:Jbardef}
\end{equation}
We show that the limiting fluctuations of $\add{u}$ satisfy the Edwards--Wilkinson equation
\begin{equation}
  \label{eq:EW-twonoises}
  \dn\m{U}_{t}(x) = \frac{1}{2}\Delta\m{U}_{t}(x)\dn t+(\Jbar_{1} \circ \overline{\add{u}}_{t})(x) \dn W_{t}(x) + (\Jtilde_{1} \circ \overline{\add{u}}_{t})(x) \dn\widetilde{W}_{t}(x), \quad \m{U}_{0} \equiv 0.
\end{equation}
Here, $\dn\widetilde{W}$ is an $m$-dimensional spacetime white noise independent of (and identically distributed to) $\dn W$.
By linearity, we can write $\m{U}_{t}=\overline{\m{U}}_{t}+\widetilde{\m{U}}_{t}$, where
\begin{equation}
  \label{eq:EW-twonoises-1}
  \dn\overline{\m{U}}_{t} =\frac{1}{2}\Delta\overline{\m{U}}_{t}\dn t + (\Jbar_{1} \circ \overline{\add{u}}_{t}) \dn W_{t}, \qquad \dn\widetilde{\m{U}}_{t} =\frac{1}{2}\Delta\widetilde{\m{U}}_{t}\dn t + (\Jtilde_{1} \circ \bar{\add{u}}_{t}) \dn \widetilde W_{t},
\end{equation}
and $\overline{\m{U}}_{0}=\widetilde{\m{U}}_{0}\equiv0$.
Moreover, solutions of \cref{eq:EW-twonoises} agree in law with those of
\begin{equation*}
  \dn\m{V}_{t} = \frac{1}{2} \Delta\m{V}_{t} \dn t + (J_{1} \circ \overline{\add{u}}_{t}) \dn W_{t}, \quad \m{V}_0 \equiv 0,
\end{equation*}
where we define $J_{1}(b) \coloneqq H_{1}(b)^{1/2} = [\Jbar_{1}(b)^{2} + \Jtilde_{1}(b)^{2}]^{1/2}.$
The separation of $\dn W$ and $\dn\widetilde{W}$ in \cref{eq:EW-twonoises} allows us to state a more precise form of the convergence.
\begin{theorem}
  \label{thm:maintheorem}
  Let $\sigma$ be \add{$L^2$-}subcritical and $\add{u}_{0}\in L^{\infty}(\R^{2})$ be deterministic.
  Then the pair of random distributions $(\gamma_{\rho}^{-1}(\add{u^\rho}-\overline{\add{u}}),\overline{\m{U}})$ converges jointly in law, in the topology of distributions on $\R_{+}\times\R^{2}$, to $(\m{U},\overline{\m{U}})$ as $\rho\to0$.
  Precisely, for any test functions $\psi,\phi\in\m{C}_{\mathrm{c}}^{\infty}(\R_{+}\times\R^{2})$, as $\rho\to0$ we have the joint convergence in law
  \begin{equation}\label{eq:mainthm-convergence}
    \begin{aligned}
      & \left(\gamma_{\rho}^{-1}\int_{\R_{+}\times\R^{2}}\psi_{t}(x)[\add{u}_{t}^{\add{\rho}}(x)-\overline{\add{u}}_{t}(x)]\d x\ds t,\int_{\R_{+}\times\R^{2}}\phi_{t}(x)\overline{\m{U}}_{t}(x)\d x\ds t\right)\\
      & \qquad\Longrightarrow\left(\int_{\R_{+}\times\R^{2}}\psi_{t}(x)\m{U}_{t}(x)\d x\ds t,\int_{\R_{+}\times\R^{2}}\phi_{t}(x)\overline{\m{U}}_{t}(x)\d x\ds t\right).
    \end{aligned}
  \end{equation}
\end{theorem}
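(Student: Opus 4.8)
The plan is to reduce the smeared fluctuation $\gamma_\rho^{-1}\langle\psi, u^\rho-\bar{u}\rangle$ to a single It\^o integral against $\dn W$, and then extract both Gaussianity and the joint limit from the convergence of a matrix quadratic variation.

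\textbf{Step 1 (Duhamel and stochastic Fubini).} From the mild form $u_t-\bar{u}_t = \gamma_\rho\int_0^t\m{G}_{t-s}[\sigma(u_s)\dn W^\rho_s]$, set $\Psi_s(y) \coloneqq \int_s^\infty(\m{G}_{t-s}\psi_t)(y)\ds t$, the (deterministic, $s$-compactly supported) backward heat profile of $\psi$, and likewise $\Phi$ for $\phi$. The stochastic Fubini theorem together with $\dn W^\rho = \m{G}_\rho\dn W$ gives
\[
\begin{aligned}
X_\rho(\psi) &\coloneqq \gamma_\rho^{-1}\int_{\R_+\times\R^2}\psi_t(x)[u^\rho_t(x)-\bar{u}_t(x)]\d x\ds t \\
&= \int_0^\infty\int_{\R^2}[\m{G}_\rho(\Psi_s\sigma(u_s))](y)\dn W_s(y) =: M^\rho_\infty ,
\end{aligned}
\]
an $\R^m$-valued random variable. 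Because $\Psi_s$ and $\m{G}_\rho$ are deterministic and $\sigma(u_s)$ is adapted, the integrand is adapted, so $M^\rho_t$ is a genuine martingale that stabilizes once $s$ exceeds the time-support of $\psi$. Since $u_0$ is deterministic, the same manipulation gives $\bar{\m{U}}(\phi) = N_\infty$, where $N_t \coloneqq \int_0^t\int\Phi_s(y)\Jbar_1(\bar{u}_s(y))\dn W_s(y)$ has a deterministic integrand; in particular $N_\infty$ is exactly Gaussian.

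\textbf{Step 2 (Convergence of the bracket --- the main obstacle).} Everything rests on showing that the matrix brackets of the $\R^{2m}$-valued martingale $(M^\rho,N)$ converge in $L^1$ to deterministic limits. Taking expectations and integrating in $y$ via $G_\rho*G_\rho = G_{2\rho}$ reduces $\E\langle M^\rho\rangle_\infty$ to an integral of the two-point matrix $\E[\sigma(u_s(z))\sigma(u_s(z'))]$ against $G_{2\rho}(z-z')\Psi_s(z)\Psi_s(z')$; rescaling $z'=z+\rho^{1/2}v$ localizes the mass to separations of order $\rho^{1/2}$. The cross bracket is simpler: the factor $\Phi_s\Jbar_1(\bar{u}_s)$ is deterministic, so a single $G_\rho$ concentrates and only the one-point matrix $\E[\sigma(u_s(z))]$ enters. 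In the scale correspondence $t\approx T-\rho^q$, separations of order $\rho^{1/2}$ mean the two points share their entire renormalization history down to the microscopic scale $q=1$, so the FBSDE/renormalization-flow analysis of \cite{DG23b} --- valid precisely because $\sigma$ is $L^2$-subcritical, i.e.\ $Q_{\mathrm{FBSDE}}(\sigma)>1$ allows \cref{eq:FBSDE} to be solved down to $q=1$ --- identifies
\[
\E[\sigma(u_s(z))\sigma(u_s(z'))] \to H_1(\bar{u}_s(z)), \qquad \E[\sigma(u_s(z))] \to \Jbar_1(\bar{u}_s(z)) .
\]
Consequently, in $L^1$,
\[
\langle M^\rho\rangle_\infty \to \int_0^\infty\int\Psi_s^2\,H_1(\bar{u}_s)\d y\ds s, \qquad \langle M^\rho,N\rangle_\infty \to \int_0^\infty\int\Psi_s\Phi_s\,\Jbar_1(\bar{u}_s)^2\d y\ds s ,
\]
while $\langle N\rangle_\infty = \int_0^\infty\int\Phi_s^2\,\Jbar_1(\bar{u}_s)^2\d y\ds s$ is already deterministic. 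The difficulty is twofold: establishing these $n$-point limits for the $\rho$-dependent solution with domination strong enough to survive the rescaling by $\rho^{1/2}$ (the quantitative heart of the matter, where subcriticality is consumed), and proving the concentration $\langle M^\rho\rangle_\infty - \E\langle M^\rho\rangle_\infty \to 0$ in probability, a four-point estimate that lets the random bracket be replaced by its mean.

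\textbf{Step 3 (Martingale CLT and identification).} With the brackets of $(M^\rho,N)$ converging to deterministic limits, the Cram\'er--Wold device reduces the joint convergence to the scalar martingale central limit theorem applied to $a\cdot M^\rho + b\cdot N$ for each $a,b\in\R^m$: its bracket is a fixed quadratic form in $(a,b)$ of the limiting matrices, hence converges to a constant, so $a\cdot M^\rho_\infty+b\cdot N_\infty$ is asymptotically Gaussian. Therefore $(X_\rho(\psi),\bar{\m{U}}(\phi)) = (M^\rho_\infty,N_\infty)$ converges in law to a centered Gaussian vector whose covariance blocks are exactly the three limits of Step 2. These match the covariance of $(\m{U}(\psi),\bar{\m{U}}(\phi))$: the diagonal block $\int\Psi^2 H_1(\bar{u})$ is $\Cov[\m{U}(\psi)] = \Cov[\m{V}(\psi)]$ since $H_1 = J_1^2$; the other diagonal block is $\Cov[\bar{\m{U}}(\phi)]$; and the off-diagonal $\int\Psi\Phi\,\Jbar_1(\bar{u})^2$ reflects that only the $\Jbar_1$-part of $\m{U}$ correlates with $\bar{\m{U}}$. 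Because $H_1 = \Jbar_1^2 + \Jtilde_1^2$, the residual covariance $\int\Psi^2\Jtilde_1(\bar{u})^2$ of $\m{U}(\psi)$ is uncorrelated with $\bar{\m{U}}$ and, by joint Gaussianity, independent of it; in the limit it is carried by the independent noise $\dn\widetilde{W}$ driving $\widetilde{\m{U}}$. This is exactly the decomposition $\m{U} = \bar{\m{U}}+\widetilde{\m{U}}$, and since $\bar{\m{U}}(\phi) = N_\infty$ appears unchanged in the pair, the joint limit is genuinely $(\m{U},\bar{\m{U}})$.

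\textbf{Step 4 (Tightness and the distributional topology).} Finally, the uniform bound $\sup_\rho\E|X_\rho(\psi)|^2 \le C$ depending only on $\psi$ (from the finite limit in Step 2), the linearity of $\psi\mapsto X_\rho(\psi)$, and a standard nuclearity/tightness criterion yield tightness of the laws of $\gamma_\rho^{-1}(u^\rho-\bar{u})$ on $\m{D}'(\R_+\times\R^2)$. Applying Steps 1--3 to arbitrary linear combinations of test functions shows that all finite-dimensional pairings converge jointly, so every subsequential limit is the Gaussian field $(\m{U},\bar{\m{U}})$, completing the proof.
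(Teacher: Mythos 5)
Your overall architecture---rewrite the smeared fluctuation as the terminal value of an It\^o martingale via Duhamel and stochastic Fubini, prove convergence of the joint quadratic variations to deterministic limits, and conclude with the martingale CLT---is exactly the paper's strategy, and your Steps 1, 3, and 4 match the paper's Section 3 in substance. The problem is Step 2, which you correctly identify as ``the quantitative heart of the matter'' but then do not carry out. You assert a microscopic two-point asymptotic $\E[\sigma(u_s(z))\sigma(u_s(z'))]\to H_1(\overline{u}_s(z))$ at separations $|z-z'|\sim\rho^{1/2}$, uniform enough to integrate against $G_{2\rho}(z-z')$, together with an unproved ``four-point estimate'' giving the concentration $\langle M^\rho\rangle_\infty-\E\langle M^\rho\rangle_\infty\to 0$. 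Neither input is supplied by the prior work in the form you invoke it, and neither is routine: at separation $\rho^{1/2}$ the two points share essentially their entire renormalization history, which is precisely the delicate regime, and a four-point concentration bound would require a further layer of the same machinery. As written, the central analytic step of your proof is a citation to statements that you would still have to prove.

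The paper avoids both issues by packaging the key input differently. First, it replaces $u$ by the ``post-smoothed'' mild iterate $\m{T}^\rho u$ (the cost of this replacement is exactly hypothesis \cref{eq:macro-small}, verified in \cref{prop:macroscopic-mild}); the resulting martingale has integrand $(\m{G}_\rho\Psi_s)\,\sigma(u_s)$ rather than your $\m{G}_\rho\bigl(\Psi_s\,\sigma(u_s)\bigr)$, so its bracket is a genuine local average $\int(\m{G}_\rho\Psi_r)^2\,(\sigma^2\circ u_r)$ of the one-point field $\sigma^2\circ u_r$, and no off-diagonal two-point function ever appears. Second, the local-averaging theorem of [DG23b, Theorems 6.3 and 8.5] (restated as \cref{eq:firstlimit}) gives $L^2$ convergence of box averages of $\sigma^2\circ u_s$ to $H_{q_\rho}\bigl(\m{G}_{\mathsf{T}_\rho(q_\rho)}u_s\bigr)$, which delivers identification of the limit and concentration of the bracket in a single stroke---no four-point estimate is needed. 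For the cross-bracket, concentration comes from the decorrelation of $\bigl(u_s(x),u_s(y)\bigr)$ at fixed macroscopic separation $x\ne y$ [DG23b, Theorem 1.7], a much softer two-point input than yours. To repair your argument you should either reduce to the post-smoothed formulation as the paper does, or prove an analogue of the local-averaging theorem adapted to the off-diagonal kernel $G_{2\rho}(z-z')$; the latter is substantial new work.
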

The emergence of Edwards--Wilkinson fluctuations is essentially equivalent to the concentration of macroscopic averages of $\sigma^{2}\circ \add{u}$ and $\sigma\circ \add{u}$.
This is a consequence of the martingale central limit theorem, which has been previously used to study fluctuations in stochastic heat equations~\add{\cite{CNN22}}\cite{CN21,NN23}.
Other approaches for proving Edwards--Wilkinson fluctuations in stochastic PDEs include Markov chain approximations \cite{GRZ18,Kot24}, the Malliavin calculus and the second-order Poincaré inequality \cite{GL20,DGRZ20,Gu20,Tao22,GHL23}, chaos expansions and the fourth moment method \cite{CSZ17,CSZ20,LZ22,CC22,CCR23}, and cluster expansions \cite{MU18}.

The large-scale concentration of $\sigma^{2}\circ \add{u}$ and $\sigma\circ \add{u}$ and the correlation structure in \cref{eq:mainthm-convergence} have an intuitive explanation.
We showed in \cite{DG23b} that the random field $\add{u}$ decorrelates rapidly in space as $\rho \to 0$.
One can thus view the noise in \cref{eq:SHE} as the mean of $\sigma(\add{u}_t)$ multiplying macroscopic modes of $\dn W_t$, plus the rapid fluctuations of $\sigma(\add{u}_t)$ multiplying microscopic modes of $\dn W_t$.
The former contributes $(\Jbar_1\circ \overline{\add{u}}_t) \dn W_t$ in \cref{eq:EW-twonoises-1}.
The latter involves only high-frequency modes of $\dn W_t$, which become independent of $\dn W_t$ in the limit $\rho \to 0$.
This explains the independent noise $(\Jtilde_1\circ \overline{\add{u}}_t) \dn \widetilde W_t$ in \cref{eq:EW-twonoises-1}.

\add{Our vector-valued analysis enables another approach to \cref{thm:maintheorem}.
  Let $\h u$ denote the vector $(u, w)^\top \in \R^{2m}$, where $u$ solves \cref{eq:SHE} and $w$ solves the $\gamma_\rho$-attenuated Edwards--Wilkinson equation driven by the same noise $\dn W^\rho$.
  Then $\h u$ also solves an equation of the form \cref{eq:SHE} with an asymmetric block matrix $\h \sigma$.
  Symmetrizing and solving \cref{eq:renorm-flow}, the resulting block-matrix $\h H$ will contain $H$ on the diagonal and $\Jbar$ on the off-diagonal.
  Thus the macroscopic statistics of $\h u$ encompass both those of $u$ and the joint relationship between $u$ and the Edwards--Wilkinson equation detailed in \cref{eq:mainthm-convergence}.
  In this sense, there is no need for the joint statement in \cref{thm:maintheorem}, as all its information can be read from the statistics of $\h u$ alone.
  Nonetheless, our argument easily yields the joint statistics in \cref{thm:maintheorem}, so we elect to prove them directly.
}

\add{%
  We also study the universality of the Edwards--Wilkinson statistics in \cref{thm:maintheorem}.
  In \cite{DG23b}, we showed that the pointwise statistics of \cref{eq:SHE} are insensitive to the fine details of the model.
  The same holds at the macroscopic level.
  To formulate our universality result, we study approximate mild solutions of a closely related model.
  Given a random field $v_t^\rho(x),$ define the mild operator
  \begin{equation}
    \label{eq:mild}
    (\m{T}^\rho v)_t(x) \coloneqq \m{G}_t v_0(x) + \gamma_\rho \int_0^t \m{G}_{t - s + \rho}[\sigma(v_s) \ds W_s](x).
  \end{equation}
    Fixed points of $\m{T}^\rho$ solve a variation of \cref{eq:SHE} with noise $\gamma_\rho \m{G}_\rho[\sigma(v)\ds W]$ rather than $\gamma_\rho \sigma(v) \m{G}_\rho \dn W$.
  We find this ``post-smoothed'' noise to be more convenient, and the choice has little effect on the large-scale behavior of $v$.
}

\add{%
  The difference $\m{T}^\rho v - v$ expresses the extent to which a field $v$ fails to solve the post-smoothed equation.
  We show that the statistics in \cref{eq:mainthm-convergence} hold when this difference is small.
  Precisely, we say $v^\rho$ is an ``approximate mild solution'' if it satisfies the following three conditions.
  For some $\ell > 2$ and all $T > 0$,
  \begin{equation}
    \label{eq:moment}
    \tag{H1}
    \sup_{\rho \in (0, 1], \, t \in [0, T], \, x \in \R^2} \E \abs{v_t^\rho(x)}^\ell < \infty.
  \end{equation}
  For all $T > 0$,
  \begin{equation}
    \label{eq:pointwise-small}
    \tag{H2}
    \lim_{\rho \to 0} \sup_{[0,T] \times \R^2} \E \abs{\m{T}^\rho v^\rho - v^\rho}^2 = 0.
  \end{equation}
  And for all $\psi \in \m{C}_{\mathrm{c}}^\infty(\R_+ \times \R^2)$,
  \begin{equation}
    \label{eq:macro-small}
    \tag{H3}
    \lim_{\rho \to 0} \E \Big|\gamma_\rho^{-1}\int_{\R_+ \times \R^2} \psi_t(x) (\m{T}^\rho v^\rho - v^\rho)_t(x) \d x \ds t \Big|^2 = 0.
  \end{equation}
  In \cite{DG23b}, we show that \zcref[range]{eq:moment,eq:pointwise-small} imply that $v$ has the proper pointwise behavior.
  The final condition \cref{eq:macro-small} ensures that $v$ is nearly a solution in a macroscopic sense.%
}
\add{%
  \begin{theorem}
    \label{thm:universal}
    Let $\sigma$ be $L^2$-subcritical.
    Suppose a random field $\bigl(v_t^\rho(x)\bigr)_{t \geq 0, x \in \R^2}$ is an approximate mild solution in the sense of \zcref[range]{eq:moment,eq:macro-small}.
    Then \cref{eq:mainthm-convergence} holds for $v$ as well.
  \end{theorem}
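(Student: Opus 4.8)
The plan is to rerun the proof of \cref{thm:maintheorem}, which uses the genuine solution $u^\rho$ only through properties shared by every approximate mild solution; I indicate the points where \zcref[range]{eq:moment,eq:macro-small} enter. Throughout I take $v_0 = u_0$ --- tacitly required for the centering by $\overline{u}$ and the limit \cref{eq:EW-twonoises} to be consistent --- so that $\m{G}_t v_0 = \overline{u}_t$, and I write $\langle\psi,f\rangle \coloneqq \int_{\R_+\times\R^2}\psi_t(x) f_t(x)\d x\ds t$. First I would reduce $v$ to $\m{T}^\rho v$ at the macroscopic scale: setting $r^\rho \coloneqq \m{T}^\rho v - v$, hypothesis \cref{eq:macro-small} says exactly that $\gamma_\rho^{-1}\langle\psi, r^\rho\rangle \to 0$ in $L^2$, so by Slutsky it suffices to prove \cref{eq:mainthm-convergence} with $\m{T}^\rho v$ replacing $v$. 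Since $(\m{T}^\rho v)_t - \overline{u}_t = \gamma_\rho\int_0^t\m{G}_{t-s+\rho}[\sigma(v_s)\ds W_s]$ by \cref{eq:mild}, a stochastic Fubini exchange yields
\begin{equation*}
  \gamma_\rho^{-1}\langle\psi,\m{T}^\rho v - \overline{u}\rangle = \int_0^\infty\!\!\int_{\R^2}\Psi_s^\rho(y)\,\sigma(v_s(y))\ds W_s(y),
\end{equation*}
where $\Psi_s^\rho(y) \coloneqq \int_s^\infty\int_{\R^2}\psi_t(x)\,G_{t-s+\rho}(x-y)\d x\ds t \to \Psi_s(y)$ in $L^2(\R_+\times\R^2)$, with $\Psi \coloneqq \Psi^0$. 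The same computation rewrites the fixed, $\rho$-independent second coordinate as $\langle\phi,\overline{\m{U}}\rangle = \int_0^\infty\int_{\R^2}\Phi_s(y)\,\Jbar_1(\overline{u}_s(y))\ds W_s(y)$, with $\Phi$ built from $\phi$ as $\Psi$ from $\psi$.

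Both coordinates are now terminal values of continuous $L^2$-martingales in the filtration of $W$ --- their integrands are adapted and square-integrable by the moment bound \cref{eq:moment} --- so I would apply the martingale central limit theorem, for which it suffices that the joint quadratic variation converge to a \emph{deterministic} matrix. The needed convergences, in probability, are
\begin{gather*}
  \int\Psi_s^\rho(y)^2\,\sigma^2(v_s(y))\d y\ds s \to \int\Psi_s(y)^2\,H_1(\overline{u}_s(y))\d y\ds s, \\
  \int\Psi_s^\rho(y)\Phi_s(y)\,\sigma(v_s(y))\d y\ds s \to \int\Psi_s(y)\Phi_s(y)\,\Jbar_1(\overline{u}_s(y))^2\d y\ds s,
\end{gather*}
together with the deterministic self-variation $\int\Phi_s(y)^2\,\Jbar_1(\overline{u}_s(y))^2\d y\ds s$ of the second coordinate. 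Using $H_1 = \Jbar_1^2 + \Jtilde_1^2$, these reproduce exactly the covariances of $(\langle\psi,\m{U}\rangle,\langle\phi,\overline{\m{U}}\rangle)$ read off from \cref{eq:EW-twonoises,eq:EW-twonoises-1}: the first variance splits into a $\Jbar_1$-part and a $\Jtilde_1$-part, and only the $\Jbar_1$-part correlates with $\overline{\m{U}}$. As the second coordinate is literally $\langle\phi,\overline{\m{U}}\rangle$ on both sides, matching covariances gives \cref{eq:mainthm-convergence}. Note that the auxiliary white noise $\widetilde W$ is never constructed; the $H_1 = \Jbar_1^2 + \Jtilde_1^2$ decomposition of the limiting variance, with only $\Jbar_1$ visible to $W$, is precisely the asymptotic independence of the high-frequency part of $\sigma(v)$ from $W$ described after \cref{eq:EW-twonoises-1}.

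The hard part will be the concentration of the quadratic variations above, which I would prove by first and second moments; this is the only place the field enters, and it enters only through \zcref[range]{eq:moment,eq:pointwise-small}. Each variation has the form $\int\Theta_s^\rho(y)\,F(v_s(y))\d y\ds s$ with a macroscopic weight $\Theta^\rho$ and $F \in \{\sigma,\sigma^2\}$. For the mean, the one-point statistics of an approximate mild solution established in \cite{DG23b} under \zcref[range]{eq:moment,eq:pointwise-small} give $\E[F(v_s(y))] \to \E[F(Z_1) \mid Z_0 = \overline{u}_s(y)]$, which is $H_1(\overline{u}_s(y))$ for $F = \sigma^2$ and $\Jbar_1(\overline{u}_s(y))$ for $F = \sigma$; dominated convergence, justified by \cref{eq:moment}, then identifies $\lim_\rho \E\int\Theta^\rho F(v)$. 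For the variance, the two-point estimates of \cite{DG23b} show that $v_s(y)$ and $v_{s'}(y')$ decorrelate once $(s,y)$ and $(s',y')$ are macroscopically separated, while the vanishing microscopic correlation length suppresses the diagonal, so $\Var\int\Theta^\rho F(v) \to 0$. This step is exactly where universality lives: its inputs depend on $v$ only through \zcref[range]{eq:moment,eq:pointwise-small}, so the concentration --- and hence \cref{eq:mainthm-convergence} --- holds for every approximate mild solution, irrespective of the model's fine details.
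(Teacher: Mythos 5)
Your overall architecture matches the paper's: use \cref{eq:macro-small} to replace $v$ by $\m{T}^\rho v$, realize the tested fluctuation as the terminal value of the martingale $M_s = \int_0^s\int(\m{G}_\rho\Psi_r)\sigma(v_r)\ds W_r$, pair it with the fixed Gaussian martingale built from $\overline{\m{U}}$, and invoke the martingale CLT once the joint quadratic variations converge in probability to the deterministic limits you write down. (Minor slip: the integrand of $[M,\overline{N}]$ should carry the factor $\Jbar_1\circ\overline{u}$ from $\dn\overline{N}$, i.e.\ it is $\Psi^\rho\Phi\,(\sigma\circ v)(\Jbar_1\circ\overline{u})$, consistent with your claimed limit $\Psi\Phi\,\Jbar_1^2$.) Your identification of where each hypothesis enters --- \cref{eq:macro-small} only for the initial reduction, \cref{eq:moment} and \cref{eq:pointwise-small} for the concentration --- is also exactly how the paper organizes things.

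The gap is in your proposed proof of the concentration of the $\sigma^2$ quadratic variation. You want to show $\int\Theta^\rho\,\sigma^2(v)\to\int\Theta\,(H_1\circ\overline{u})$ by computing a mean and a variance, with the variance killed by two-point decorrelation. But \cref{eq:moment} only provides $\ell>2$ moments of $v$, while
\begin{equation*}
  \Var\Big[\int\Theta^\rho_s(y)\,\sigma^2(v_s(y))\d y\ds s\Big]
  = \int\!\!\int \Theta^\rho(y)\Theta^\rho(y')\,\Cov\big(\sigma^2(v(y)),\sigma^2(v(y'))\big)
\end{equation*}
involves $\E\big[|v(y)|^2|v(y')|^2\big]$-type quantities, i.e.\ fourth moments, which may be infinite under \cref{eq:moment}; and even off the diagonal, passing to the limit in the covariance requires uniform integrability of these fourth-order products, which $\ell>2$ does not give. (Your argument is fine for the cross-variation, where the integrand $\sigma(v)\,(\Jbar_1\circ\overline{u})$ is only linear in $v$ --- that is precisely how the paper handles it.) The paper circumvents the quadratic case entirely differently: it uses the $L^2$-concentration of the \emph{square root} of microscopic local averages of $\sigma^2\circ v$ around $H_{q_\rho}^{1/2}(\m{G}_{\mathsf{T}_\rho(q_\rho)}v)$ (Theorems~6.3 and~8.5 of \cite{DG23b}, extended to approximate mild solutions), together with the Powers--Størmer inequality and \cref{lem:squaregaussiansame} to replace the square local average by the Gaussian one; all quantities there are of order $|v|$, so second moments suffice, and the resulting convergence in probability of $\m{G}_{r_\rho}(\sigma^2\circ v_s)$ is upgraded to $L^1$ convergence of $[M]_s$ by uniform integrability at exponent $\ell/2>1$. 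Your route could likely be rescued by truncating $\sigma^2$ at level $K$, using boundedness to make the variance finite and the two-point convergence in law to kill it, and controlling the truncation error uniformly in $\rho$ via uniform integrability of $|v|^2$; but as written the step fails, and this integrability obstruction is exactly the reason the paper routes the $\sigma^2$ concentration through the square-root decoupling estimate rather than a direct second-moment computation.
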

  In~\cite{DG23b}, we show that the solution $u$ of \cref{eq:SHE} satisfies \zcref[range]{eq:moment,eq:pointwise-small}.
  We show \cref{eq:macro-small} in \cref{prop:macroscopic-mild} below, so in fact \cref{thm:maintheorem} is a special case of \cref{thm:universal}.
}

\add{
  Because our hypotheses on $v$ do not involve any derivatives, \zcref[range]{eq:moment,eq:macro-small} is a rather expansive notion of ``approximate solution'' that should include many small-scale variations on \cref{eq:SHE}.
  For example, we expect that it applies to certain discretizations of \cref{eq:SHE}.
  Thus the statistics in \cref{eq:mainthm-convergence} are universal in that they hold for a large class of models that differ at the fine ($\rho$) scale.%
}

\section{Concentration}

\add{%
  Let $v$ be an approximate mild solution in the sense of \zcref[range]{eq:moment,eq:macro-small}.%
} %
We prove concentration for macroscopic averages of $\sigma^{2}\circ v_{t}$ and $\sigma\circ v_{t}$, beginning with a simple convolution estimate.
Given $\xi>0$ and $x\in\R^{2}$, let $\square_{\xi}(x) \coloneqq [-\xi/2,\xi/2]^{2}+x$ and $S_{\xi}(x) \coloneqq \xi^{-2}\mathbf{1}_{\square_{\xi}(0)}(x)$.
\begin{lemma}
  \label{lem:squaregaussiansame}
  There exists a constant $C < \infty$ such that for all $s>0$ and $\xi\in(0,\sqrt{s}\,]$,
  \begin{equation}
    \|G_{s}*S_{\xi}-G_{s}\|_{L^{1}}\le C\xi^{2}/s.\label{eq:squaregaussiansame}
  \end{equation}
\end{lemma}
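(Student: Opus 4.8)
The plan is to use that $S_\xi$ is a symmetric probability density concentrated at spatial scale $\xi$, so that convolving $G_s$ against it is a second-order perturbation in $\xi$; the resulting error is controlled by the Hessian of $G_s$, whose $L^1$ norm scales like $1/s$ by parabolic scaling. The product of these two features, $\xi^2$ from the width of $S_\xi$ and $1/s$ from the Hessian, produces the claimed bound $\xi^2/s$.

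First I would write, using $\int_{\R^2} S_\xi = 1$,
\begin{equation*}
  (G_s * S_\xi)(x) - G_s(x) = \int_{\R^2} \bigl[G_s(x - y) - G_s(x)\bigr]\, S_\xi(y) \ds y.
\end{equation*}
Because $S_\xi$ is even in $y$, the first-order contribution $\int_{\R^2} \nabla G_s(x)\cdot y\; S_\xi(y) \ds y$ vanishes, which is the key cancellation. A Taylor expansion with integral remainder then gives
\begin{equation*}
  (G_s * S_\xi)(x) - G_s(x) = \int_{\R^2} \int_0^1 (1 - \tau)\, y^\top \bigl[\nabla^2 G_s(x - \tau y)\bigr] y \ds \tau \; S_\xi(y) \ds y.
\end{equation*}

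Next I would take absolute values and integrate in $x$, moving the modulus inside by Minkowski's integral inequality. For each fixed $y$ and $\tau$, translation invariance of Lebesgue measure yields
\begin{equation*}
  \int_{\R^2} \abss{\, y^\top \bigl[\nabla^2 G_s(x - \tau y)\bigr] y \,} \ds x \le \abs{y}^2 \int_{\R^2} \bigl\|\nabla^2 G_s(z)\bigr\|_{\mathrm{op}} \ds z \le \abs{y}^2 \sum_{i,j} \|\partial_{ij} G_s\|_{L^1}.
\end{equation*}
The parabolic rescaling $G_s(x) = s^{-1} G_1(x/\sqrt{s})$ shows $\|\partial_{ij} G_s\|_{L^1} = s^{-1}\|\partial_{ij} G_1\|_{L^1}$, so the right-hand side is at most $C_0 \abs{y}^2/s$. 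On the support of $S_\xi$ we have $\abs{y}^2 \le \xi^2/2$, and $\int_0^1 (1-\tau)\ds\tau = \tfrac12$; combining these estimates gives $\|G_s * S_\xi - G_s\|_{L^1} \le (C_0/4)\,\xi^2/s$. The hypothesis $\xi \le \sqrt{s}$ is not actually needed for the derivation — it merely marks the regime in which the bound is informative, since both $G_s * S_\xi$ and $G_s$ are probability densities and the difference is trivially bounded by $2$ in $L^1$.

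I do not expect any genuine obstacle here: the only points requiring care are the vanishing of the linear term by symmetry and the bookkeeping in the rescaling of the Hessian, both of which are routine. If one preferred to avoid Taylor's theorem, a Fourier-side computation via $\widehat{G_s * S_\xi - G_s}(\zeta) = \e^{-s\abs{\zeta}^2/2}(\widehat{S_\xi}(\zeta) - 1)$ is available, but recovering an $L^1$ (rather than $L^2$) bound from the transform is less clean, so I would favor the real-space argument above.
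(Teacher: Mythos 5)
Your proof is correct and follows essentially the same route as the paper: the symmetry of $S_{\xi}$ cancels the first-order Taylor term, and the second-order remainder is controlled by the Hessian of $G_{s}$, whose size scales like $1/s$. The only difference is in handling the remainder: you use the integral form plus translation invariance to reduce to $\|\partial_{ij}G_{s}\|_{L^{1}} = s^{-1}\|\partial_{ij}G_{1}\|_{L^{1}}$, whereas the paper bounds $\sup_{\square_{\xi}(x)}|\D^{2}G_{s}|_{\mathrm{F}}$ pointwise by $Cs^{-1}G_{s/2}(x)$ and then integrates in $x$ --- which is why the paper needs the hypothesis $\xi\le\sqrt{s}$ and your version, as you correctly observe, does not.
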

\noindent
\begin{proof}
  In the following, we allow the constant $C$ to increase from instance to instance.
  The symmetry of $S_{\xi}$ yields
  \begin{equation*}
    (G_{s}*S_{\xi}-G_{s})(x)=\fint_{\square_{\xi}(x)}[G_{s}(y)-G_{s}(x)-\nabla G_{s}(x)\cdot(y-x)]\d y.
  \end{equation*}
  By Taylor's theorem, we have
  \begin{equation}
    \label{eq:applyTaylor}
    |G_{s}*S_{\xi}-G_{s}|(x)\le C\xi^{2}\sup_{\square_{\xi}(x)}|\D^{2}G_{s}|_{\mathrm{F}}.
  \end{equation}
  A calculation then yields
  \begin{equation*}
    |\D^{2}G_{s}(x)|_{\mathrm{F}}\le Cs^{-1}(x^{2}/s + 1) G_{s}(x)\le Cs^{-1}G_{s/2}(x).
  \end{equation*}
  Since $\xi^{2}\le s$, the supremum over $\square_{\xi}(x)$ does not significantly alter this bound, and we have
  \begin{equation*}
    \sup_{\square_{\xi}(x)}|\D^{2}G_{s}|_{\mathrm{F}}\le Cs^{-1}G_{s/2}(x).
  \end{equation*}
  Using this in \cref{eq:applyTaylor} and integrating, we obtain \cref{eq:squaregaussiansame}.
\end{proof}
We now prove the concentration of macroscopic averages of $\sigma^{2}\circ v_{t}$.
\begin{proposition}
  For any fixed $s>0$, $z_{\rho}\to z\in\R^{2}$, and $r_{\rho}\to r>0$ (as $\rho\downarrow0$),
  \begin{equation}
    \label{eq:sigma2concentration}
    \lim_{\rho\downarrow0} \E\left|\left[\m{G}_{r_{\rho}}(\sigma^{2}\circ v_{s}^{\add{\rho}})(z_{\rho})\right]^{1/2}-\left[\m{G}_{r}(H_{1}\circ\overline{v}_{s})(z)\right]^{1/2}\right|_{\mathrm{F}}^{2} = 0.
  \end{equation}
  As a consequence, we have
  \begin{equation}
    \label{eq:sigma2concinprb}
    \m{G}_{r_{\rho}}(\sigma^{2}\circ v_{s}^{\add{\rho}})(z_{\rho})\to\m{G}_{r}(H_{1}\circ\overline{v}_{s})(z)\ \text{in probability as }\rho\downarrow0.
  \end{equation}
\end{proposition}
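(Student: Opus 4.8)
The plan is to establish the single un-rooted $L^1$ estimate
\begin{equation*}
  \lim_{\rho\downarrow0}\E\bigl|\m{G}_{r_\rho}(\sigma^2\circ v_s^{\add{\rho}})(z_\rho)-\m{G}_r(H_1\circ\bar v_s)(z)\bigr|_{\mathrm F}=0,
\end{equation*}
and to deduce both displayed claims from it. The consequence \cref{eq:sigma2concinprb} is then immediate, since $L^1$ convergence implies convergence in probability. For the rooted statement \cref{eq:sigma2concentration} I would invoke the Powers--Størmer inequality for nonnegative-definite matrices, $|A^{1/2}-B^{1/2}|_{\mathrm F}^2\le\op{tr}|A-B|\le\sqrt m\,|A-B|_{\mathrm F}$; taking expectations reduces the rooted $L^2$ bound to the un-rooted $L^1$ bound above. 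Here $\m{G}_r(H_1\circ\bar v_s)(z)$ is deterministic, while $\m{G}_{r_\rho}(\sigma^2\circ v_s)(z_\rho)$ is symmetric nonnegative-definite as a Gaussian average of such matrices, so the inequality applies pointwise in $\omega$.

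To prove the $L^1$ estimate I would split it into a bias term $|\E\m{G}_{r_\rho}(\sigma^2\circ v_s)(z_\rho)-\m{G}_r(H_1\circ\bar v_s)(z)|_{\mathrm F}$ and a fluctuation term $\E|\m{G}_{r_\rho}(\sigma^2\circ v_s)(z_\rho)-\E[\,\cdot\,]|_{\mathrm F}$. For the bias, interchange the expectation with the deterministic heat average to obtain $\E\m{G}_{r_\rho}(\sigma^2\circ v_s)(z_\rho)=\int_{\R^2}G_{r_\rho}(z_\rho-x)\,\E[\sigma^2(v_s(x))]\d x$. The one-point pointwise statistics of \cite{DG23b} give $\E[\sigma^2(v_s^\rho(x))]\to H_1(\bar v_s(x))$ for each $x$, and since $\sigma$ is Lipschitz, \cref{eq:moment} (with $\ell>2$) yields the uniform bound $\sup_{x,\rho}\E|\sigma^2(v_s(x))|_{\mathrm F}\le C$. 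Comparing kernels via $\|G_{r_\rho}(z_\rho-\anon)-G_r(z-\anon)\|_{L^1}\to0$ and then applying dominated convergence with the integrable majorant $C\,G_r(z-\anon)$ sends the bias to zero.

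The fluctuation term is the heart of the matter, and the \emph{main obstacle} is that \cref{eq:moment} supplies only $\ell>2$ moments, so $\sigma^2\circ v_s$ need not lie in $L^2$ and a naive variance computation is unavailable. I would circumvent this by truncation. Fix a bounded continuous truncation $\sigma^2_M$ with $|\sigma^2_M|_{\mathrm F}\le M$ agreeing with $\sigma^2$ on $\{|\sigma^2|_{\mathrm F}\le M\}$, and write $X_\rho^M\coloneqq\m{G}_{r_\rho}(\sigma^2_M\circ v_s)(z_\rho)$. For the bounded field the second moment expands as
\begin{equation*}
  \E\bigl|X_\rho^M-\E X_\rho^M\bigr|_{\mathrm F}^2=\int_{\R^2}\!\int_{\R^2}G_{r_\rho}(z_\rho-x)\,G_{r_\rho}(z_\rho-y)\,c^M_\rho(x,y)\d x\d y,
\end{equation*}
where $c^M_\rho(x,y)\coloneqq\sum_{k,\ell}\Cov\bigl[(\sigma^2_M(v_s(x)))_{k\ell},(\sigma^2_M(v_s(y)))_{k\ell}\bigr]$. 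The two-point decorrelation of \cite{DG23b} forces $c^M_\rho(x,y)\to0$ for each fixed $x\neq y$, boundedness of $\sigma^2_M$ controls the integrand, and---crucially---since $r_\rho\to r>0$ the kernel does not concentrate on the null diagonal $\{x=y\}$, so dominated convergence drives this truncated variance to $0$. The truncation error is controlled uniformly in $\rho$ by the uniform integrability coming from the $L^{\ell/2}$ bound with $\ell/2>1$: using that $G_{r_\rho}$ is a probability kernel,
\begin{equation*}
  \E\bigl|\m{G}_{r_\rho}((\sigma^2-\sigma^2_M)\circ v_s)(z_\rho)\bigr|_{\mathrm F}\le\sup_{x,\rho}\E\bigl[|\sigma^2(v_s(x))|_{\mathrm F}\,\mathbf 1_{\{|\sigma^2(v_s(x))|_{\mathrm F}>M\}}\bigr]=:\varepsilon(M).
\end{equation*}
Combining these via the triangle inequality, then letting $\rho\downarrow0$ followed by $M\to\infty$ (so that $\varepsilon(M)\to0$), kills the fluctuation. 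Together with the bias bound this yields the $L^1$ convergence, and hence the proposition.
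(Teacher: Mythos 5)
Your proposal is correct, but it proves the proposition by a genuinely different route than the paper. The paper's proof runs through the mesoscopic concentration estimate of \cite[Theorems 6.3 and 8.5]{DG23b} (the bound \cref{eq:firstlimit} at the intermediate scale $\mathsf{T}_\rho(q_\rho)=s/\omega(\rho)$), then upgrades the box average to the Gaussian average via \cref{lem:squaregaussiansame} and Powers--Størmer, and finally uses the Lipschitz/Hölder regularity of $H^{1/2}$ to replace $H_{q_\rho}\circ\m{G}_{\mathsf{T}_\rho(q_\rho)}v_s$ by $H_1\circ\bar v_s$. Crucially, that argument works at the level of matrix \emph{square roots} throughout, which are Lipschitz images of $v$ and hence inherit the $\ell>2$ moments of \cref{eq:moment}; the quadratic-growth observable $\sigma^2\circ v_s$ is never handled directly in $L^2$. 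You instead run the mean-plus-variance argument that the paper reserves for its second proposition (the one for $\sigma\circ v_s$): one-point convergence in law plus uniform integrability for the bias, two-point decorrelation for the fluctuation, with Powers--Størmer applied only at the very end to root the estimate. You correctly identified the one real obstacle to this route---$\sigma^2\circ v_s$ need not be square-integrable under \cref{eq:moment}---and your truncation at level $M$, with the error controlled uniformly in $\rho$ by the $L^{\ell/2}$ bound ($\ell/2>1$), closes that gap; the truncated observable is bounded and continuous, so the decorrelation of the pair $(v_s(x),v_s(y))$ does drive the truncated covariance to zero, and the kernel $G_{r_\rho}(z_\rho-\anon)$ is dominated uniformly for small $\rho$ since $r_\rho\to r>0$. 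What each approach buys: yours is structurally simpler, unifies the treatment of the two concentration propositions, and dispenses with \cref{lem:squaregaussiansame}, the intermediate object $H_{q_\rho}\circ\m{G}_{\mathsf{T}_\rho(q_\rho)}v_s$, and the regularity theory for $H^{1/2}$; the paper's route avoids any truncation, stays quantitative and uniform in $x$ (inherited from \cref{eq:firstlimit}), and more directly exhibits how the renormalization flow $H_q$ interpolates between the microscopic observable $\sigma^2$ and the macroscopic limit $H_1$. Both rest on deep inputs from \cite{DG23b} that the present paper invokes elsewhere, so neither is more self-contained in substance.
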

\begin{proof}
  It is standard that \cref{eq:sigma2concentration} implies \cref{eq:sigma2concinprb}, so we just need to prove \cref{eq:sigma2concentration}.
  We rely on results of \cite{DG23b}, so we introduce further notation from that work.
  We define a regularized logarithm $\mathsf{L}(\tau)=\log(\tau+1)$ as well as
  \begin{equation*}
    \mathsf{S}_{\rho}(\tau)\coloneqq\frac{\mathsf{L}(\tau/\rho)}{\mathsf{L}(1/\rho)} \And \mathsf{T}_{\rho}(q) \coloneqq \rho[(1/\rho+1)^{q}-1].
  \end{equation*}
  Then $\mathsf{S}_{\rho}$ and $\mathsf{T}_{\rho}$ are inverse functions representing an exponent and time, respectively.

  Theorem\add{s}~6.3 \add{and 8.5} of \cite{DG23b} give us a function $\omega\colon(0,1]\to(0,\infty)$ such that, as $\rho\to 0$, we have \add{$\omega(\rho)\to \infty$} but $\omega(\rho)=(1/\rho)^{o(1)}$ and the following holds.
  If we define $q_{\rho}\coloneqq\mathsf{S}_{\rho}\big(s/\omega(\rho)\big)$, so $\omega(\rho)\mathsf{T}_{\rho}(q_{\rho})=s$, then 
  \begin{equation}
    \label{eq:firstlimit}
    \adjustlimits\lim_{\rho\downarrow0}\sup_{x\in\R^{2}}\E\left|\big[S_{\mathsf{T}_{\rho}(q_{\rho})^{1/2}}*(\sigma^{2}\circ v_{s})\big](x)^{1/2}-H_{q_{\rho}}\big(\m{G}_{\mathsf{T}_{\rho}(q_{\rho})}v_{s}(x)\big)^{1/2}\right|_{\mathrm{F}}^{2}=0.
  \end{equation}
  \add{%
    This estimate is the centerpiece of \cite{DG23b}, and proves essential in this work as well.
    In more detail, \cite[Theorem~6.3]{DG23b} establishes \cref{eq:firstlimit} for fixed points of $\m{T}$ (exact mild solutions), and \cite[Theorem~8.5]{DG23b} extends it by continuity to all approximate mild solutions.%
    }

    We next apply \cite[Proposition A.2]{DG23b} to a random spatial value drawn from $\m{N}(0,r_\rho)$, so the expectation in the proposition corresponds to spatial convolution:
  \begin{align}
    \label{eq:secondlimit}
    \begin{aligned}
      & \E\left|\big[G_{r_{\rho}}*S_{\mathsf{T}_{\rho}(q_{\rho})^{1/2}}*(\sigma^{2}\circ v_{s})\big](z_\rho)^{1/2}-\m{G}_{r_{\rho}}(H_{q_{\rho}}\circ\m{G}_{\mathsf{T}_{\rho}(q_{\rho})}v_{s})(z_\rho)^{1/2}\right|_{\mathrm{F}}^{2}\\
      &\hspace{1cm}\le \int_{\R^2} G_{r_{\rho}}(z_\rho-x)\E\Big|\big[S_{\mathsf{T}_{\rho}(q_{\rho})^{1/2}}*(\sigma^{2}\circ v_{s})\big](x)^{1/2}\\
      &\hspace{6cm}-\big[H_{q_{\rho}}\big(\m{G}_{\mathsf{T}_{\rho}(q_{\rho})}v_{s}(x)\big)\big]^{1/2}\Big|_{\mathrm{F}}^{2}\d x \to 0
    \end{aligned}
  \end{align}
  as $\rho \to 0$ by \cref{eq:firstlimit}.
  Writing $|\cdot|_{*}$ for the nuclear norm, the Powers--Størmer inequality yields
  \begin{align*}
    \E \Big|\big[G_{r_{\rho}}*S_{\mathsf{T}_{\rho}(q_{\rho})^{1/2}}*(\sigma^{2}\circ &v_{s})\big](z_\rho)^{1/2}- \m{G}_{r_{\rho}}(\sigma^{2}\circ v_{s})(z_\rho)^{1/2}\Big|_{\add{\mathrm{F}}}^{2}\\
                                                                                           & \le \E\big|\big(G_{r_{\rho}}*S_{\mathsf{T}_{\rho}(q_{\rho})^{1/2}}-G_{r_{\rho}}\big)*(\sigma^{2}\circ v_{s})(z_\rho)\big|_{*}\\
                                                                                           & \le\|G_{r_{\rho}}*S_{\mathsf{T}_{\rho}(q_{\rho})^{1/2}}-G_{r_{\rho}}\|_{L^{1}}\sup_{x\in\R^2}\E|\sigma^{2}(v_s(x))|_{*}.
  \end{align*}
  Since $\mathsf{T}_{\rho}(q_{\rho})=s/\omega(\rho)\to0$ and $r_{\rho}\to r>0$ as $\rho\to0$, \cref{lem:squaregaussiansame} implies that the first factor goes to $0$ as $\rho\to 0$.
  For the second factor,
  \begin{equation*}
    \add{\sup_{x\in\R^2}}\E|(\sigma^{2} \circ v_{s})(x)|_{*}\le \add{\sup_{x\in\R^2}}\E|(\sigma \circ v_{s})(x)|_{\mathrm{F}}^{2}.
  \end{equation*}
  \add{Because $\sigma$ is Lipschitz,} the right side is bounded by \cref{eq:moment} and Jensen.

  It remains to show that $\m{G}_{r_{\rho}}(H_{q_{\rho}}\circ\m{G}_{\mathsf{T}_{\rho}(q_{\rho})}v_{s})(z_\rho) \to \m{G}_{r}(H_{1}\circ\overline{v}_{s})(z)$.
  To see this, we note that the properties of $\omega$ imply that $q_\rho \to 1$ as $\rho \to 0$.
  Thus by \add{Theorems~1.7 and 8.5} of \cite{DG23b}, $\E \abss{\m{G}_{\mathsf{T}_{\rho}(q_{\rho})}v_{s}(x) - \bar{v}_s(x)}^2 \to 0$ for all $x \in \R^2$.
  Also, the root-decoupling function $H^{1/2}$ is Lipschitz in space by subcriticality and 1/2-H\"older in time by \cite[Proposition 2.14]{DG23b}.
  Thus the moment bound in \cref{eq:moment} yields
  \begin{equation}
    \label{eq:H1toaverage}
    \E\left|H_{q_{\rho}}\big(\m{G}_{\mathsf{T}_{\rho}(q_{\rho})}v_{s}(x)\big)^{1/2}-(H_{1} \circ \bar{v}_s)(x)^{1/2}\right|_{\mathrm{F}}^{2}\to0\qquad\text{as }\rho\downarrow0
  \end{equation}
  for each $x \in \R^2$.
  Applying \cite[Proposition A.2]{DG23b} as in \cref{eq:secondlimit}, we find
  \begin{align}
    \E \Big|\m{G}_{r_{\rho}}(&H_{q_{\rho}}\circ\m{G}_{\mathsf{T}_{\rho}(q_{\rho})}v_{s})(z_\rho)^{1/2} - \m{G}_{r_\rho}(H_1 \circ \bar{v}_s)(z_\rho)^{1/2}\Big|_{\mathrm{F}}^2\label{eq:moving-Gaussians}\\
                             &\leq \int_{\R^2} G_{r_\rho}(z_\rho - x) \E \absb{H_{q_{\rho}}\big(\m{G}_{\mathsf{T}_{\rho}(q_{\rho})}v_{s}(x)\big)^{1/2}-(H_{1} \circ \bar{v}_s)(x)^{1/2}}_{\mathrm{F}}^2 \d x.\nonumber
  \end{align}
  The first factor is bounded by a $\rho$-independent envelope in $L^1(\R^2)$.
  The second factor tends to zero pointwise by \cref{eq:H1toaverage} and is uniformly bounded by \cref{eq:moment}.
  Thus by dominated convergence, the expression in \cref{eq:moving-Gaussians} vanishes as $\rho \to 0$.
  Finally, the heat flow is spacetime continuous, so $\m{G}_{r_\rho}(H_1 \circ \bar{v}_s)(z_\rho) \to \m{G}_{r}(H_1 \circ \bar{v}_s)(z)$ as $\rho \to 0$.
  This completes the proof of the proposition.
\end{proof}
We next treat $\sigma\circ v_{s}$.
We recall the definition \cref{eq:Jbardef} of $\Jbar_{1}$.
\begin{proposition}
  For each fixed $s>0$, $z_{\rho}\to z \in \R^{2}$, $r_{\rho}\to r>0$ (as $\rho\downarrow0$), and bounded function $f \colon \R^2 \to (\R^m)^{\otimes 2}$, we have
  \begin{equation}
    \lim_{\rho\downarrow0} \E\left|\m{G}_{r_{\rho}}[(\sigma\circ v_{s}^{\add{\rho}})f](z_{\rho})-\m{G}_{r}[(\Jbar_{1}\circ\overline{v}_{s})f](z)\right|_{\mathrm{F}}^{2}\to0.\label{eq:sigmaconc}
  \end{equation}
  As a consequence, we have
  \begin{equation}
    \m{G}_{r_{\rho}}[(\sigma\circ v_{s}^{\add{\rho}})f](z_{\rho})\to\m{G}_{r}[(\Jbar_{1}\circ\overline{v}_{s})f](z)\text{ in probability as }\rho\downarrow0.\label{eq:sigmaconcinprob}
  \end{equation}
\end{proposition}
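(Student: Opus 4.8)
The plan is to mirror the proof of the preceding proposition, replacing the second moment $\sigma^{2}$ by the first moment $\sigma$ and the root-decoupling function $H_{q}^{1/2}$ by the first-moment decoupling function $\Jbar_{q}(b)\coloneqq\E[\sigma(Z_{1})\mid Z_{1-q}=b]$, which reduces to $\Jbar_{1}$ of \cref{eq:Jbardef} at $q=1$. The essential input is the first-moment analog of \cref{eq:firstlimit}: with $q_{\rho}=\mathsf{S}_{\rho}(s/\omega(\rho))$ as before,
\[
  \adjustlimits\lim_{\rho\downarrow0}\sup_{x\in\R^{2}}\E\Big|\big[S_{\mathsf{T}_{\rho}(q_{\rho})^{1/2}}*(\sigma\circ v_{s})\big](x)-\Jbar_{q_{\rho}}\big(\m{G}_{\mathsf{T}_{\rho}(q_{\rho})}v_{s}(x)\big)\Big|_{\mathrm{F}}^{2}=0.
\]
Since $\Jbar_{q}$ solves the \emph{linear} equation $\partial_{q}\Jbar_{q}=\frac{1}{2} H_{q}:\nabla_{b}^{2}\Jbar_{q}$ with $\Jbar_{0}=\sigma$ — driven by the same coefficients $H_{q}$ that govern \cref{eq:renorm-flow} — this estimate follows from the machinery of \cite{DG23b} exactly as \cref{eq:firstlimit} does, and may also be viewed through the augmented field $\h u$ from the introduction, whose decoupling matrix $\h H$ carries $\Jbar$ off the diagonal. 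This is the main obstacle: it is genuinely new information, since the gap between $\Jbar_{1}$ and $H_{1}^{1/2}$ is precisely the conditional standard deviation $\Jtilde_{1}$, so it cannot be read off from the $\sigma^{2}$-concentration alone. A convenient feature is that no matrix square root or Powers--Størmer step is needed here, as $\sigma$ concentrates linearly.

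Granting the displayed estimate, I would accommodate the bounded weight $f$ and pass to the spatial average by Jensen's inequality (in place of \cite[Proposition A.2]{DG23b}): for matrix-valued $\Psi$, convexity of $|\anon|_{\mathrm{F}}^{2}$ against the probability density $G_{r_{\rho}}(z_{\rho}-\anon)$ gives
\[
  \E\big|\m{G}_{r_{\rho}}[\Psi f](z_{\rho})\big|_{\mathrm{F}}^{2}\le\|f\|_{L^{\infty}}^{2}\int_{\R^{2}}G_{r_{\rho}}(z_{\rho}-x)\,\E|\Psi(x)|_{\mathrm{F}}^{2}\d x\le\|f\|_{L^{\infty}}^{2}\sup_{x\in\R^{2}}\E|\Psi(x)|_{\mathrm{F}}^{2}.
\]
Taking $\Psi=S_{\mathsf{T}_{\rho}(q_{\rho})^{1/2}}*(\sigma\circ v_{s})-\Jbar_{q_{\rho}}\circ\m{G}_{\mathsf{T}_{\rho}(q_{\rho})}v_{s}$ and invoking the first-moment estimate shows that $\m{G}_{r_{\rho}}[(S_{\mathsf{T}_{\rho}(q_{\rho})^{1/2}}*(\sigma\circ v_{s}))f](z_{\rho})$ and $\m{G}_{r_{\rho}}[(\Jbar_{q_{\rho}}\circ\m{G}_{\mathsf{T}_{\rho}(q_{\rho})}v_{s})f](z_{\rho})$ have the same $L^{2}$ limit.

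Next I would remove the inner average. Writing $w_{\rho}\coloneqq G_{r_{\rho}}(z_{\rho}-\anon)f$ and using that $G_{r_{\rho}}$ is scalar and $S_{\xi}$ symmetric, the difference $\m{G}_{r_{\rho}}[(\sigma\circ v_{s})f](z_{\rho})-\m{G}_{r_{\rho}}[(S_{\mathsf{T}_{\rho}(q_{\rho})^{1/2}}*(\sigma\circ v_{s}))f](z_{\rho})$ equals $\int_{\R^{2}}\sigma(v_{s}(x))\,h_{\rho}(x)\d x$ with $h_{\rho}\coloneqq w_{\rho}-S_{\mathsf{T}_{\rho}(q_{\rho})^{1/2}}*w_{\rho}$. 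By Cauchy--Schwarz and the pointwise bound $M\coloneqq\sup_{x}\E|\sigma(v_{s}(x))|_{\mathrm{F}}^{2}<\infty$ (finite by \cref{eq:moment} and Lipschitz continuity of $\sigma$), this is bounded in $L^{2}$ by $M\|h_{\rho}\|_{L^{1}}^{2}$. As $r_{\rho}\to r>0$ the functions $w_{\rho}$ admit a uniform $L^{1}$ envelope, while the smoothing width $\mathsf{T}_{\rho}(q_{\rho})=s/\omega(\rho)\to0$; continuity of translation in $L^{1}$ then yields $\|h_{\rho}\|_{L^{1}}\to0$, so the desmoothing error vanishes. This step is the one place where a merely bounded (possibly discontinuous) $f$ must be handled, which is exactly why I route through the $L^{1}$ bound rather than a pointwise estimate.

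Finally, I would replace $\Jbar_{q_{\rho}}\circ\m{G}_{\mathsf{T}_{\rho}(q_{\rho})}v_{s}$ by $\Jbar_{1}\circ\overline{v}_{s}$ following the argument culminating in \cref{eq:moving-Gaussians}. Since $q_{\rho}\to1$ and $\E|\m{G}_{\mathsf{T}_{\rho}(q_{\rho})}v_{s}(x)-\overline{v}_{s}(x)|^{2}\to0$ for each $x$ (Theorems~1.7 and 8.5 of \cite{DG23b}), and since $\Jbar_{q}$ is Lipschitz in space (by $L^{2}$-subcriticality) and $\frac{1}{2}$-Hölder in time (as for $H^{1/2}$ in \cite[Proposition 2.14]{DG23b}), the integrand $\E|\Jbar_{q_{\rho}}(\m{G}_{\mathsf{T}_{\rho}(q_{\rho})}v_{s}(x))-\Jbar_{1}(\overline{v}_{s}(x))|_{\mathrm{F}}^{2}$ tends to $0$ pointwise and is dominated via \cref{eq:moment}; the Jensen bound above and dominated convergence give convergence in $L^{2}$, and spacetime continuity of the heat flow gives $\m{G}_{r_{\rho}}[(\Jbar_{1}\circ\overline{v}_{s})f](z_{\rho})\to\m{G}_{r}[(\Jbar_{1}\circ\overline{v}_{s})f](z)$. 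Chaining the three steps proves \cref{eq:sigmaconc}, and \cref{eq:sigmaconcinprob} follows since $L^{2}$ convergence implies convergence in probability.
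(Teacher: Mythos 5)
Your overall strategy---mirroring the $\sigma^2$-concentration proof with a first-moment decoupling function $\Jbar_q$---is not the paper's route, and it rests on a key estimate that you assert rather than prove. Neither the paper nor \cite{DG23b} establishes the first-moment analog of \cref{eq:firstlimit} on which your argument hinges: Theorem~6.3 of \cite{DG23b} is a statement about local averages of $\sigma^2\circ v$ and the flow $H_q$, and extending it to $\sigma\circ v$ with a linear flow for $\Jbar_q$ (equivalently, carrying out the augmented-field construction $\h u=(u,w)^\top$ sketched in the introduction) is a genuine piece of renormalization analysis, not a routine corollary---you yourself flag it as ``genuinely new information,'' which is exactly the problem. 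The same applies to the spatial Lipschitz and temporal H\"older regularity of $\Jbar_q$ invoked in your last step: subcriticality gives $\Lip\sqrt{H_q}<\infty$, which does not obviously transfer to $\Jbar_q$. So as written there is a gap at the central step.

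The paper's proof avoids all of this with a much softer argument, exploiting that the target $\m{G}_r[(\Jbar_1\circ\overline{v}_s)f](z)$ is deterministic, so $L^2$ convergence reduces to convergence of the mean plus vanishing of the variance. The mean converges by the one-point limit law of \cite{DG23b} (Theorems~1.4 and~8.5) together with uniform integrability from \cref{eq:moment}. The variance vanishes because for $x\ne y$ the pair $(v_s(x),v_s(y))$ becomes asymptotically independent (Theorems~1.7 and~8.5 of \cite{DG23b}), so the two-point covariance $k_s(x,y)$ of $g\circ v_s$ tends to zero off the diagonal and bounded convergence in the double Gaussian integral finishes the job. Both inputs are stated results; no concentration estimate for microscopic averages of $\sigma\circ v$ is needed. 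Your intermediate manipulations (Jensen for the Gaussian average, the $L^1$ desmoothing to handle a merely bounded $f$) are individually sound, but to make your route rigorous you would have to actually prove your displayed first-moment estimate; otherwise the mean-plus-variance decomposition is the efficient path.
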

\begin{proof}
  As above, \cref{eq:sigmaconc} implies \cref{eq:sigmaconcinprob}, so we just prove \cref{eq:sigmaconc}.
  Using uniform integrability from \cref{eq:moment}, \add{Theorems~1.4 and 8.5} of \cite{DG23b} imply that
  \begin{equation*}
    \E \, \m{G}_{r_{\rho}}[(\sigma\circ v_{s})f](z_{\rho}) \to \m{G}_{r}[(\Jbar_{1}\circ\overline{v}_{s})f](z) \qquad\text{as }\rho\downarrow0.
  \end{equation*}
  It thus suffices to show that $\m{G}_{r_{\rho}}[(\sigma\circ v_{s})f](z_{\rho})$ concentrates as $\rho \to 0$.
  Let $g \colon \R^m \to \R$ be Lipschitz and $h \colon \R^2 \to \R$ be bounded; these represent entries of $\sigma$ and $f$, respectively.
  To show that the variance of $\m{G}_{r_{\rho}}[(g\circ v_{s})h](z_{\rho})$ tends to zero, we control the two-point correlation function
  \begin{equation}
    \label{eq:ksdef}
    k_{s}(x,y)\coloneqq\Cov\big((g \circ v_s)(x), (g \circ v_s)(y)\big),
  \end{equation}
  noting that
  \begin{equation}
    \label{eq:varexpand}
      \Var\big[\m{G}_{r_{\rho}}[(g\circ v_{s})h](z_{\rho})\big] =\int_{\R^2 \times \R^2} G_{r_{\rho}}(z_{\rho}-x)G_{r_{\rho}}(z_{\rho}-y)h(x)h(y)k_{s}(x,y)\d x\ds \add{y}.
  \end{equation}
  Now, for fixed $x \neq y$, \add{Theorems~1.7 and 8.5} of \cite{DG23b} imply that the pair $\big(v_{s}(x),v_{s}(y)\big)$ converges in law as $\rho\to 0$ to a pair of independent random variables.
  By \cref{eq:moment} and the Lipschitz assumption on $g$, we can use uniform integrability to pass to the $\rho\to 0$ limit in \cref{eq:ksdef} and see that $k_{s}(x,y)\to0$ as $\rho\to 0$ for all $x\ne y$.
  Applying the bounded convergence theorem to \cref{eq:varexpand}, we see that
  \begin{equation*}
    \lim_{\rho\downarrow0} \Var\big[\m{G}_{r_{\rho}}[(g\circ v_{s})h](z_{\rho})\big] = 0.
  \end{equation*}
  Letting $g$ and $h$ range over the components of $\sigma$ and $f$ and performing matrix multiplication, the proposition follows.
\end{proof}
\add{
  To close the section, we confirm that the pre-smoothed noise \cref{eq:SHE} satisfies \cref{eq:macro-small}.
  \begin{proposition}
    \label{prop:macroscopic-mild}
    Let $\sigma$ be $L^2$-subcritical and $u^\rho$ solve \cref{eq:SHE} with $u_0 \in L^\infty(\R^2)$ deterministic.
    Then for all $\psi \in \m{C}_c^\infty(\R_+ \times \R^2)$,
    \begin{equation}
      \label{eq:macroscopic-mild}
      \E \Big|\gamma_\rho^{-1} \int_{\R_+ \times \R^2} \psi_t(x) (\m{T}^{\rho}u^\rho - u^\rho)_t(x) \d t \ds x\Big|^2 \to 0 \quad \text{as } \rho \to 0.
    \end{equation}
  \end{proposition}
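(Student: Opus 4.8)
The plan is to express $\m{T}^\rho u - u$ as a commutator noise, dualize the test function through the heat semigroup, and apply the It\^o isometry. Writing \cref{eq:SHE} in mild form and using $\m{G}_{t-s+\rho} = \m{G}_{t-s}\m{G}_\rho$, the pre- and post-smoothed noises differ by a commutator,
\begin{equation*}
  (\m{T}^\rho u - u)_t = \gamma_\rho\int_0^t\m{G}_{t-s}\bigl(\m{G}_\rho[\sigma(u_s)\dn W_s] - \sigma(u_s)\,\m{G}_\rho\dn W_s\bigr),
\end{equation*}
whose $s$-integrand at the point $z$ equals $\int_{\R^2} G_\rho(z-z')[\sigma(u_s(z')) - \sigma(u_s(z))]\dn W_s(z')$. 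The prefactor $\gamma_\rho$ cancels the $\gamma_\rho^{-1}$ in \cref{eq:macroscopic-mild}. Setting $\Psi_s \coloneqq \int_s^\infty\m{G}_{t-s}\psi_t\ds t$---a smooth function in $L^1\cap L^\infty$ supported in $\{s\le T_0\}$, where $[0,T_0]$ contains the time support of $\psi$---and swapping the deterministic integrals past the stochastic one (stochastic Fubini), I obtain
\begin{equation*}
  \gamma_\rho^{-1}\int_{\R_+\times\R^2}\psi_t(\m{T}^\rho u - u)_t = \int_0^\infty\int_{\R^2} A_s(z)\dn W_s(z), \quad A_s(z) = \int_{\R^2} G_\rho(z-y)\Psi_s(y)[\sigma(u_s(z)) - \sigma(u_s(y))]\d y.
\end{equation*}
Since $A_s$ is predictable and square-integrable, the It\^o isometry shows that the left side of \cref{eq:macroscopic-mild} equals $\E\int_0^\infty\int_{\R^2}|A_s(z)|_{\mathrm F}^2\d z\ds s$.

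Next I would bound $A_s$ by the \emph{microscopic} spatial increment of $\sigma\circ u_s$. Because $\sigma$ is Lipschitz and $G_\rho(z-\cdot)\d y$ is a probability measure, Cauchy--Schwarz gives $\E|A_s(z)|_{\mathrm F}^2 \le C\|\Psi_s\|_\infty\int G_\rho(z-y)|\Psi_s(y)|\,\E|u_s(z) - u_s(y)|^2\d y$. Integrating in $z$ and $s$, the target is controlled by
\begin{equation*}
  C\int_0^{T_0}\|\Psi_s\|_\infty\|\Psi_s\|_{L^1}\Bigl(\sup_y\int_{\R^2} G_\rho(z-y)\,\E|u_s(z)-u_s(y)|^2\d z\Bigr)\ds s,
\end{equation*}
so everything reduces to showing that $\E|u_s(z)-u_s(y)|^2$ is small at the smoothing scale $|z-y|\sim\sqrt\rho$, after averaging against $G_\rho$.

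I would then split $u_s = \bar u_s + (u_s - \bar u_s)$ with $\bar u_s = \m{G}_s u_0$. The deterministic mean is spatially smooth for each $s>0$, so $\int G_\rho(z-y)|\bar u_s(z)-\bar u_s(y)|^2\d z \to 0$ pointwise in $s$; as this is dominated by $4\|u_0\|_{L^\infty}^2$ with integrable $s$-weights, dominated convergence disposes of it. For the stochastic part, the mild formula and the It\^o isometry yield
\begin{equation*}
  \E\bigl|(u_s-\bar u_s)(z) - (u_s-\bar u_s)(y)\bigr|^2 = \gamma_\rho^2\int_0^s\int\int D_a(w)D_a(\tilde w)\,G_{2\rho}(w-\tilde w)\,\E\langle\sigma(u_a(w)),\sigma(u_a(\tilde w))\rangle_{\mathrm F}\d w\,\d\tilde w\,\ds a,
\end{equation*}
with $D_a(w) = G_{s-a}(z-w) - G_{s-a}(y-w)$. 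Once the bounded correlation factor is controlled (see below), the surviving cancellation lives in the quadratic form in $D_a$, captured on the Fourier side by
\begin{equation*}
  \int\int D_a(w)D_a(\tilde w)\,G_{2\rho}(w-\tilde w)\d w\d\tilde w = \int e^{-\rho|\xi|^2}\,|\h D_a(\xi)|^2\d\xi,
\end{equation*}
which regularizes the otherwise divergent $a\to s$ contribution and produces $\E|(u_s-\bar u_s)(z)-(u_s-\bar u_s)(y)|^2 \le C\gamma_\rho^2|z-y|^2/\rho$ for $|z-y|\le\sqrt\rho$. Averaging against $G_\rho$ contributes $\rho^{-1}\int G_\rho(w)|w|^2\d w = 2$, leaving an overall $O(\gamma_\rho^2) = O(1/\log(1/\rho)) \to 0$.

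The main obstacle is precisely this stochastic increment bound, because the state-dependent diffusivity $\sigma(u_a)$ sits between the two heat kernels: the correlation $\E\langle\sigma(u_a(w)),\sigma(u_a(\tilde w))\rangle_{\mathrm F}$ is only bounded (uniformly, by \cref{eq:moment}), not constant, so one cannot pull it out of the quadratic form without destroying the cancellation that tames the $a\to s$ singularity. I expect to resolve this by keeping $G_{2\rho}(w-\tilde w)$ coupled throughout and exploiting that this correlation varies only macroscopically---its own spatial modulus is again governed by an increment---so that a Gronwall-type bootstrap in $s$ closes the estimate; alternatively one may invoke the spatial regularity behind the pointwise statistics of \cite{DG23b}. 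The same conclusion admits a cleaner conceptual reading through the two-point function: points separated by $O(\sqrt\rho)$ stay coupled across the entire renormalization $q\in[0,1]$, so $(u_s(z),u_s(y))$ converges to a perfectly correlated pair and $\E\langle\sigma(u_s(z)),\sigma(u_s(y))\rangle_{\mathrm F} \to \op{tr}(H_1\circ\bar u_s)(z)$. Feeding this together with the single-point limit $\E|\sigma(u_s(z))|_{\mathrm F}^2\to\op{tr}(H_1\circ\bar u_s)(z)$ into the three-term expansion of $\E\int|A_s|_{\mathrm F}^2$ shows that all three terms converge to the common value $\int_0^\infty\int_{\R^2}\Psi_s^2\,\op{tr}(H_1\circ\bar u_s)\d z\ds s$, whose signed combination vanishes.
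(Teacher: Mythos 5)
Your reduction is correct as far as it goes: the commutator representation of $\m{T}^\rho u - u$, the duality through $\Psi_s$, and the It\^o isometry do reduce \cref{eq:macroscopic-mild} to showing that $\sup_y \int G_\rho(z-y)\,\E|u_s(z)-u_s(y)|^2 \d z$ vanishes in a dominated way in $s$, and you are also right that the crude bound---replacing the two-point correlation of $\sigma\circ u_a$ by its supremum---fails, yielding $\gamma_\rho^2\cdot O(\log(1/\rho)) = O(1)$ rather than $o(1)$. The problem is that this microscopic increment estimate, which you label ``the main obstacle'' and do not prove, is not a technical footnote: it is essentially the whole content of the proposition. The paper does not prove it from scratch either; its entire proof is to cite Proposition~8.8 of \cite{DG23b}, which establishes exactly this kind of bound (in the form \cref{eq:pointwise-small}, $\sup\E|\m{T}^\rho u - u|^2\to 0$), and to observe that replacing the kernel $\m{G}_{t-s}G_\rho$ appearing there by $\m{G}_{t-s}\psi_t$ shrinks the relevant spacetime $L^2$-norm by a factor of $\gamma_\rho$, which cancels the extra $\gamma_\rho^{-1}$ in \cref{eq:macroscopic-mild}. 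So your proposal reconstructs the bookkeeping around the hard estimate but leaves the hard estimate itself as a sketch.

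None of your three proposed resolutions closes the gap as stated. The Gronwall bootstrap is the most concrete, but if you run it you will find that the self-interaction term (from polarizing $\E\langle\sigma(u_a(w)),\sigma(u_a(\tilde w))\rangle_{\mathrm F}$ and taking absolute values of $D_a(w)D_a(\tilde w)$) contributes $\Lip(\sigma)^2$ times an order-one constant: the $\log(1/\rho)$ coming from $\int_0^s\!\int\!\int|D_a(w)||D_a(\tilde w)|G_{2\rho}(w-\tilde w)$ is cancelled by $\gamma_\rho^2$ but nothing more, so the iteration only closes under a smallness condition on $\Lip\sigma$---precisely the kind of restriction (as in \cite{Tao22}) that this paper is written to remove. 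Your third option, the three-term expansion using asymptotically perfect correlation of $(u_s(z),u_s(y))$ at separation $O(\sqrt\rho)$, is conceptually the right mechanism, but the input you need---convergence of the mixed second moments of $\sigma\circ u_s$ at \emph{microscopic} separations, uniformly enough for dominated convergence---is asserted, not derived; the two-point results this paper quotes from \cite{DG23b} concern fixed macroscopic separations, where the limit is independence rather than perfect correlation. In short: either cite \cite[Proposition~8.8]{DG23b} and make the $\gamma_\rho$ bookkeeping observation, as the paper does, or be prepared to redo the renormalization-flow analysis of that paper to obtain the microscopic $L^2$-modulus of continuity in the full $L^2$-subcritical regime.
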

  \begin{proof}
    This is Proposition~8.8 in~\cite{DG23b} with two changes: we test against $\psi$ and multiply by $\gamma_\rho^{-1}$.
    These changes ``cancel'' because the computation reduces to the spacetime $L^2$-norm of $\m{G}_{t-s} \psi_t$, which is smaller than that of $\m{G}_{t-s} G_\rho$ from \cite{DG23b} by a factor of $\gamma_\rho$.
    So one can show the same quantitative bound as in \cite[Proposition~8.8]{DG23b}, and \cref{eq:macroscopic-mild} follows.
  \end{proof}
  In~\cite{DG23b}, Propositions 8.6 and 8.8 show that solutions $u$ of \cref{eq:SHE} satisfy the hypotheses \zcref[range]{eq:moment,eq:pointwise-small} provided $\sigma$ is $L^2$-subcritical.
  We have just verified \cref{eq:macro-small} as well.
  Hence \cref{thm:maintheorem} follows from \cref{thm:universal}, and it suffices to show the latter.
}

\section{The martingale argument}

Throughout this section, we assume that $\sigma$ is \add{$L^2$-}subcritical and \add{$v$ satisfies \zcref[range]{eq:moment,eq:macro-small}.}
We fix a time $t>0$ and a deterministic test function $\psi\in\m{C}_{\mathrm{c}}^{\infty}\big((0,t)\times\R^{2}\big)$.
\add{%
  Recall the mild operator $\m{T}^\rho$ from \cref{eq:mild}.
  By \cref{eq:macro-small}, it suffices to use $\m{T}^\rho v^\rho$ in place of $u^\rho$ in \cref{eq:mainthm-convergence}.%
}
Let
\begin{equation*}
  \Psi_{s}(x)\coloneqq\int_{s}^{t}\m{G}_{r-s}\psi_{r}(x)\d r,
\end{equation*}
which satisfies the backward forced heat equation
\begin{equation}
  \label{eq:backwardsHE}
  \partial_{s}\Psi_{s}(x)=-\frac{1}{2}\Delta\Psi_{s}(x)-\psi_{s}(x),\qquad \Psi_{t}\equiv0.
\end{equation}
\add{Recalling $\bar{v}_t \coloneqq \m{G}_t v_0$, we define}
\begin{equation*}
  M_{s}\coloneqq\gamma_{\rho}^{-1}\int_{\R^2}(\add{\m{T}}v-\overline{v})_s(x)\Psi_{s}(x)\d x+\gamma_{\rho}^{-1}\int_{0}^{s}\!\!\!\int_{\R^2}(\add{\m{T}} v-\overline{v})_{r}(x)\psi_{r}(x)\d x\ds r.
\end{equation*}
\add{So $M_0 = 0$ and our object of interest is the terminal value}
\begin{equation*}
  M_{t} = \gamma_{\rho}^{-1}\int_{0}^{t}\!\!\!\int_{\R^2} (\add{\m{T}} v-\overline{v})_r(x)\psi_{r}(x) \d x\ds r.
\end{equation*}
\add{Using \cref{eq:mild}, one can check that}
  \begin{equation}
    \label{eq:mild-diff}
    \add{\dn(\m{T} v)_t = \frac{1}{2} \Delta (\m {T} v)_t + \gamma_\rho \m{G}_\rho[\sigma(v_t) \ds W_t].}
  \end{equation}
Then \cref{eq:backwardsHE}, \cref{eq:mild-diff}, Itô's formula, and integration by parts yield
\begin{align}
  \dn M_s &= \gamma_\rho^{-1}\int_{\R^2} \left[\frac{1}{2} \Delta (\add{\m{T}} v - \bar{v})_s \Psi_s - (\add{\m{T}} v - \bar{v})_s\left(\frac{1}{2}\Delta \Psi_s + \psi_s\right) + (\add{\m{T}} v - \bar{v})_s\psi_s\right] \ds x \ds s\nonumber\\
          &\hspace{8cm}+ \int_{\R^2} \Psi_s \m{G}_{\rho}[\sigma(v_s)\d W_{s}] \d x\nonumber\\
          &= \int_{\R^2}\Psi_{s}\m{G}_{\rho}[\sigma(v_s) \d W_{s}] \d x = \int_{\R^2} (\m{G}_{\rho}\Psi_{s}) \sigma(v_s) \d W_{s}.\label{eq:dMs}
\end{align}
\add{Now \cref{eq:moment}} ensures that $\sigma(v_{s})$ has uniformly bounded second moment, so $(M_{s})_{s}$ is a vector-valued martingale with matrix-valued quadratic variation
\begin{equation}
  \label{eq:MQV}
  [M]_{\add{s}}=\int_{0}^{\add{s}}\!\!\!\int_{\R^2} (\m{G}_{\rho}\Psi_{r})(x)^{2} (\sigma^{2} \circ v_{r})(x) \d x\ds r.
\end{equation}

We now define an analogous martingale for the Edwards--Wilkinson solution $\overline{\m{U}}$ of \cref{eq:EW-twonoises-1}.
We consider a test function $\phi\in\m{C}_{\mathrm{c}}^{\infty}\big((0,t)\times\R^{2}\big)$ and define
\begin{equation*}
  \Phi_{s}(x) \coloneqq\int_{s}^{t}\m{G}_{r-s}\phi_{r}(x)\d r
\end{equation*}
as well as
\begin{equation*}
    N_{s} \coloneqq \int_{\R^2} \m{U}_{s} \Psi_{s}\d x + \int_{0}^{s}\!\!\!\int_{\R^2} \m{U}_{r} \psi_{r}\d x\ds r \ \ \text{and}\ \  \bar{N}_s \coloneqq \int_{\R^2} \bar{\m{U}}_{s} \Phi_{s}\d x + \int_{0}^{s}\!\!\!\int_{\R^2} \bar{\m{U}}_{r} \phi_{r}\d x\ds r.
\end{equation*}
By a similar application of Itô's formula, these are martingales \add{with quadratic covariation}
\begin{gather*}
  [N]_{\add{s}} = \int_0^{\add{s}} \!\!\! \int_{\R^2} \Psi_r^2 (J_1^2 \circ \bar{v}_r)\d x \ds r, \quad [\bar{N}]_{\add{s}}= \int_0^{\add{s}} \!\!\! \int_{\R^2} \Phi_r^2 (\Jbar_1^2 \circ \bar{v}_r)\d x \ds r,\\
    [N, \bar{N}]_{\add{s}} = \int_0^{\add{s}} \!\!\! \int_{\R^2} \Psi_r \Phi_r (J_1 \circ \bar{v}_r)(\Jbar_1 \circ \bar{v}_r)\d x \ds r
\end{gather*}
and \add{terminal values}
\begin{equation*}
  N_t = \int_{0}^{t}\!\!\!\int_{\R^2} \m{U}_{r} \psi_{r}\d x\ds r, \quad \bar{N}_t = \int_{0}^{t}\!\!\!\int_{\R^2} \bar{\m{U}}_{r} \phi_{r}\d x\ds r.
\end{equation*}

We note that $[N]_{\add{s}}$, $[\overline{N}]_{\add{s}}$, and $[N,\overline{N}]_{\add{s}}$ are deterministic, which reflects the fact that $N_{\add{s}}$ and $\overline{N}_{\add{s}}$ are jointly Gaussian.
In order to prove \cref{thm:maintheorem}, by the martingale central limit theorem (see, e.g.,~\cite[Theorem VIII.3.11]{JS03}) it suffices to show that $[M]_{\add{s}}\to[N]_{\add{s}}$ and $[M,\overline{N}]_{\add{s}}\to[N,\overline{N}]_{\add{s}}$ in probability as $\rho\to 0$ \add{%
  for all $s \in [0, t]$.
  This is a standard approach appearing, for example, in the proof of Proposition~3.5 in~\cite{NN23}.%
}
These limits constitute the following two propositions.
\begin{proposition}
\add{For all $s \in [0, t]$,} we have $[M^{\add{\rho}}]_{\add{s}}\to[N]_{\add{s}}$ in probability as $\rho\to 0$.
\end{proposition}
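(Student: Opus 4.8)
The plan is to show the strong statement $\E\,\abs{[M^\rho]_s - [N]_s}_{\mathrm{F}} \to 0$; since $[N]_s$ is deterministic, this yields convergence in probability. Recall that $J_1^2 = H_1$, so $[N]_s = \int_0^s\!\int_{\R^2}\Psi_r^2\,(H_1\circ\bar v_r)\d x\ds r$ is a fixed element of $\m{H}_+^m$, while by \cref{eq:MQV} we have $[M^\rho]_s = \int_0^s A_r^\rho\ds r$ with $A_r^\rho \coloneqq \int_{\R^2}(\m{G}_\rho\Psi_r)^2(\sigma^2\circ v_r)\d x \in \m{H}_+^m$. Writing $a_r \coloneqq \int_{\R^2}\Psi_r^2(H_1\circ\bar v_r)\d x$, Fubini and the triangle inequality give $\E\abs{[M^\rho]_s-[N]_s}_{\mathrm{F}} \le \int_0^s \E\abs{A_r^\rho - a_r}_{\mathrm{F}}\ds r$. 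Using \cref{eq:moment} (which bounds $\sup_x\E\abs{\sigma(v_r(x))}_{\mathrm{F}}^2$, hence $\sup_x\E\abs{\sigma^2(v_r(x))}_*$) together with the at-most-quadratic growth of $H_1$, the integrand is dominated by a $\rho$-independent multiple of $\|\Psi_r\|_{L^2}^2$, which is integrable on $[0,s]$. So by dominated convergence it suffices to prove, for each fixed $r\in(0,t)$, that $\E\abs{A_r^\rho - a_r}_{\mathrm{F}}\to 0$.

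First I would replace the smoothed weight $\m{G}_\rho\Psi_r$ by $\Psi_r$ itself. Since $\Psi_r$ is smooth and compactly supported, $\m{G}_\rho\Psi_r\to\Psi_r$ in $L^2(\R^2)$, so $\|(\m{G}_\rho\Psi_r)^2-\Psi_r^2\|_{L^1}\to 0$; bounding the matrix integrand in nuclear norm, the difference $\E\abs{A_r^\rho - \tilde A_r^\rho}_{\mathrm{F}}$, with $\tilde A_r^\rho \coloneqq \int_{\R^2}\Psi_r^2(\sigma^2\circ v_r)\d x$, is at most $\|(\m{G}_\rho\Psi_r)^2-\Psi_r^2\|_{L^1}\sup_x\E\abs{\sigma^2(v_r(x))}_*\to 0$. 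Thus the task reduces to showing $\tilde A_r^\rho\to a_r$ in $L^1$.

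The crux is to establish this concentration using only the moment bound \cref{eq:moment} with exponent $\ell>2$. A direct variance computation for $\tilde A_r^\rho$ would require controlling $\E\abs{\sigma^2(v_r)}^2 \lesssim 1+\E\abs{v_r}^4$, i.e.\ $\ell\ge4$, which we do not have. Instead I would route through square roots exactly as in the proof of \cref{eq:sigma2concentration}. Normalizing $\bar\mu_r(\d x)\coloneqq\|\Psi_r\|_{L^2}^{-2}\Psi_r^2(x)\d x$ to a probability measure, we have $\tilde A_r^\rho = \|\Psi_r\|_{L^2}^2\int_{\R^2}(\sigma^2\circ v_r)\d\bar\mu_r$ and $a_r = \|\Psi_r\|_{L^2}^2\int_{\R^2}(H_1\circ\bar v_r)\d\bar\mu_r$. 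I claim
\begin{equation*}
  \E\left|\Big(\textstyle\int_{\R^2}(\sigma^2\circ v_r)\d\bar\mu_r\Big)^{1/2} - \Big(\textstyle\int_{\R^2}(H_1\circ\bar v_r)\d\bar\mu_r\Big)^{1/2}\right|_{\mathrm{F}}^2 \to 0,
\end{equation*}
which is proved by repeating the argument for \cref{eq:sigma2concentration} verbatim with $\bar\mu_r$ in place of the Gaussian $G_{r_\rho}(z_\rho-\anon)\d x$. Indeed, the Gaussian there entered only (i) through the Jensen-type inequality \cite[Proposition A.2]{DG23b}, which holds for an arbitrary probability measure and here contracts the uniform estimate \cref{eq:firstlimit} to the $\bar\mu_r$-average; (ii) through \cref{lem:squaregaussiansame}, whose role is now played by $\|S_\xi*\Psi_r^2-\Psi_r^2\|_{L^1}\to 0$ (with $\xi=\mathsf{T}_\rho(q_\rho)^{1/2}\to0$) in the Powers--Størmer step that removes the tiny-box average; and (iii) through the final dominated-convergence step, where \cref{eq:H1toaverage} upgrades $H_{q_\rho}(\m{G}_{\mathsf{T}_\rho(q_\rho)}v_r)$ to $H_1\circ\bar v_r$. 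Working throughout with matrix square roots is what makes $\ell>2$ sufficient, since the relevant quantities grow only linearly in $v_r$.

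The square-root convergence above gives $\int(\sigma^2\circ v_r)\d\bar\mu_r \to \int(H_1\circ\bar v_r)\d\bar\mu_r$ in probability, hence $\tilde A_r^\rho\to a_r$ in probability. To upgrade to $L^1$ I would verify uniform integrability of $\tilde A_r^\rho$: choosing $\epsilon>0$ with $2+2\epsilon<\ell$, Jensen applied to $\bar\mu_r$ and the growth bound $\abs{\sigma^2(w)}\le C(1+\abs{w}^2)$ give $\E\abs{\tilde A_r^\rho/\|\Psi_r\|_{L^2}^2}_{\mathrm{F}}^{1+\epsilon} \le \int\E\abs{\sigma^2(v_r)}_{\mathrm{F}}^{1+\epsilon}\d\bar\mu_r \lesssim 1+\sup_x\E\abs{v_r(x)}^{2+2\epsilon}<\infty$ uniformly in $\rho$ by \cref{eq:moment}. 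Convergence in probability together with this uniform integrability yields $\E\abs{\tilde A_r^\rho-a_r}_{\mathrm{F}}\to0$, completing the fixed-$r$ statement and, after the dominated convergence in $r$ from the first paragraph, the proposition. The main obstacle is precisely the third step---arranging the whole concentration at the level of square roots so that the weak moment hypothesis $\ell>2$ suffices; the time integration and the weight replacement are routine.
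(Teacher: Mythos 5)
Your argument is correct, but it handles the weight $\Psi_r^2$ by a genuinely different route than the paper. The paper never reopens the proof of the concentration statement \cref{eq:sigma2concentration}: it expands $(\m{G}_{\rho}\Psi_r)(x)^2$ as a double integral of products of two Gaussians, applies the identity $G_{q_1}(x-y_1)G_{q_2}(x-y_2)=G_{q_1+q_2}(y_1-y_2)G_{(q_1^{-1}+q_2^{-1})^{-1}}(\cdots-x)$, and after Fubini recognizes the inner $x$-integral as a Gaussian average $\m{G}_{r_\rho}(\sigma^2\circ v_r)(z_\rho)$ with $r_\rho\to r>0$ and $z_\rho \to z$, to which \cref{eq:sigma2concinprb} applies as a black box; uniform integrability from \cref{eq:moment} then gives $\E\absb{[M]_s-[N]_s}_{\mathrm{F}}\to 0$ over the whole six-fold integral at once. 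You instead normalize $\Psi_r^2\d x$ to a probability measure and rerun the proof of \cref{eq:sigma2concentration} with that measure in place of the Gaussian. Your itemization of where the Gaussian actually enters that proof --- the Jensen step via \cite[Proposition A.2]{DG23b}, the $L^1$ comparison replacing \cref{lem:squaregaussiansame}, and the final dominated-convergence step --- is accurate, and each piece survives the generalization (continuity of translation in $L^1$ stands in for the quantitative bound of \cref{lem:squaregaussiansame}). Both routes rest on the same input \cref{eq:firstlimit} from \cite{DG23b} and the same square-root/Powers--Størmer device that makes $\ell>2$ suffice; the paper's version is shorter on the page because the concentration proposition is reused verbatim, while yours avoids the Gaussian product identity at the cost of reproving a mild generalization of that proposition. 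Two cosmetic points: $\Psi_r$ is not compactly supported (it is a heat-semigroup image of $\psi$), only smooth with rapid decay, which is all your $L^2$ and $L^1$ arguments actually need; and your preliminary replacement of $\m{G}_\rho\Psi_r$ by $\Psi_r$ is an extra step the paper sidesteps by simply carrying the $+\rho$ inside the Gaussian variances and letting it vanish in the limit.
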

\begin{proof}
  We expand
  \begin{equation*}
    \Psi_{r}(x)^{2}= \int_{[s,t]^2 \times \R^4} G_{\tau_{1}-r}(x-y_{1})G_{\tau_{2}-r}(x-y_{2})\psi_{\tau_{1}}(y_{1})\psi_{\tau_{2}}(y_{2}) \d y_1 \ds y_2 \ds \tau_1 \ds \tau_2
  \end{equation*}
  and
  \begin{equation*}
    (\m{G}_{\rho}\Psi_{r})(x)^{2}= \int_{[s,t]^2 \times \R^4} G_{\tau_{1}-r+\rho}(x-y_{1})G_{\tau_{2}-r+\rho}(x-y_{2})\psi_{\tau_{1}}(y_{1})\psi_{\tau_{2}}(y_{2}) \d y_1 \ds y_2 \ds \tau_1 \ds \tau_2.
  \end{equation*}
  We recall that
  \begin{equation*}
    G_{q_{1}}(x-y_{1})G_{q_{2}}(x-y_{2})=G_{q_{1}+q_{2}}(y_{1}-y_{2})G_{(q_{1}^{-1}+q_{2}^{-1})^{-1}}\left(\frac{q_{2}y_{1}+q_{1}y_{2}}{q_{1}+q_{2}}-x\right).
  \end{equation*}
  Using the last two displays in \cref{eq:MQV}, we get
  \begin{equation}
    \begin{aligned}[]
      [M]_{s} & = \int_{\R^2} \dn x \int_{0}^{s} \!\dn r\int_{[r,t]^2} \dn \tau_1 \ds \tau_2 \int_{\R^4} \dn y_1 \ds y_2 \; \psi_{\tau_{1}}(y_{1})\psi_{\tau_{2}}(y_{2})G_{\tau_{1}+\tau_{2}-2r+2\rho}(y_{1}-y_{2})\\
              &\hspace{3mm}\times\m{G}_{[(\tau_{1}-r+\rho)^{-1}+(\tau_{2}-r+\rho)^{-1}]^{-1}}(\sigma^{2}\circ v_{r})\left(\frac{(\tau_{2}-r+\rho)y_{1}+(\tau_{1}-r+\rho)y_{2}}{\tau_{1}+\tau_{2}-2r+2\rho}-x\right).
    \end{aligned}
    \label{eq:MQV-expand}
  \end{equation}
  Similarly,
  \begin{equation}
    \label{eq:NQV-expand}
    \begin{aligned}[]
      [N]_{s} & = \int_{\R^2} \dn x \int_{0}^{s} \!\dn r\int_{[r,t]^2} \dn \tau_1 \ds \tau_2 \int_{\R^4} \dn y_1 \ds y_2 \; \psi_{\tau_{1}}(y_{1})\psi_{\tau_{2}}(y_{2})G_{\tau_{1}+\tau_{2}-2r}(y_{1}-y_{2})\\
              &\hspace{19mm}\times\m{G}_{[(\tau_{1}-r)^{-1}+(\tau_{2}-r)^{-1}]^{-1}}(H_{1}\circ\overline{v}_{r})\left(\frac{(\tau_{2}-r)y_{1}+(\tau_{1}-r)y_{2}}{\tau_{1}+\tau_{2}-2r}-x\right).
    \end{aligned}
  \end{equation}
  By \cref{eq:sigma2concinprb}, the second line of \cref{eq:MQV-expand} converges in probability as $\rho\to0$ to the second line of \cref{eq:NQV-expand}.
  Uniform integrability from \cref{eq:moment} implies that $\E|[M]_{s}-[N]_{s}|_{\mathrm{F}}\to 0$ as $\rho \to 0$, and Markov's inequality completes the proof.
\end{proof}
\begin{proposition}
  \add{For all $s \in [0, t]$,} $[M^{\add{\rho}},\overline{N}]_{\add{s}}\to[N,\overline{N}]_{\add{s}}$ in probability as $\rho\to 0$.
\end{proposition}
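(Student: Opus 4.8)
The plan is to follow the proof of the preceding proposition almost verbatim, with the $\sigma^2$-concentration \cref{eq:sigma2concinprb} replaced by the $\sigma$-concentration \cref{eq:sigmaconcinprob}. First I would record the cross-variation. Because $\bar{N}$ is driven by the same white noise $\dn W$ as $M$---with $\dn \bar{N}_s=\int_{\R^2}\Phi_s(\Jbar_1\circ\bar{v}_s)\d W_s$ and $\dn M_s=\int_{\R^2}(\m{G}_\rho\Psi_s)\sigma(v_s)\d W_s$ from \cref{eq:dMs}---polarizing the quadratic variation of the two Itô integrals against the shared noise yields
\begin{equation*}
  [M,\bar{N}]_s=\int_0^s\!\!\!\int_{\R^2}(\m{G}_\rho\Psi_r)(x)\,\Phi_r(x)\,\sigma(v_r)(x)\,(\Jbar_1\circ\bar{v}_r)(x)\d x\ds r,
\end{equation*}
where I used that $\sigma$ and $\Jbar_1$ take values in symmetric matrices to drop a transpose. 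The sole random factor is $\sigma(v_r)$; the weight $\Jbar_1\circ\bar{v}_r$ is deterministic and bounded, since $v_0\in L^\infty(\R^2)$ and $\Jbar_1$ is Lipschitz.

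Next I would expand the two backward heat potentials as in the proof of the previous proposition, the one asymmetry being that $\m{G}_\rho\Psi_r$ carries a $+\rho$ in its kernels while $\Phi_r$, coming from the exact Edwards--Wilkinson martingale $\bar{N}$, does not:
\begin{equation*}
  (\m{G}_\rho\Psi_r)(x)=\int_r^t\!\!\!\int_{\R^2}G_{\tau_1-r+\rho}(x-y_1)\psi_{\tau_1}(y_1)\d y_1\ds\tau_1,\quad\Phi_r(x)=\int_r^t\!\!\!\int_{\R^2}G_{\tau_2-r}(x-y_2)\phi_{\tau_2}(y_2)\d y_2\ds\tau_2.
\end{equation*}
Applying the product-of-Gaussians identity from that proof with $q_1=\tau_1-r+\rho$ and $q_2=\tau_2-r$ collapses the $x$-integral into $G_{q_1+q_2}(y_1-y_2)$ times a single Gaussian average of $(\sigma\circ v_r)(\Jbar_1\circ\bar{v}_r)$, centered at $\frac{q_2y_1+q_1y_2}{q_1+q_2}$ with variance $(q_1^{-1}+q_2^{-1})^{-1}$. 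By \cref{eq:sigmaconcinprob}, applied with the bounded weight $f=\Jbar_1\circ\bar{v}_r$ and with the center and (positive) radius converging as $\rho\to0$, this average tends in probability to the corresponding average in which $\sigma\circ v_r$ is replaced by $\Jbar_1\circ\bar{v}_r$; simultaneously the $+\rho$ drops from the centers, radii, and the factor $G_{q_1+q_2}(y_1-y_2)$. The resulting deterministic limit is exactly the integrand of $[N,\bar{N}]_s$.

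Finally I would pass from this pointwise-in-probability convergence to convergence of the integrals. The integrand is dominated uniformly in $\rho$: the second moment $\E\abs{\sigma(v_r)}_{\mathrm{F}}^2$ is bounded by \cref{eq:moment} and the Lipschitz property of $\sigma$, the weight $\Jbar_1\circ\bar{v}_r$ is bounded, and the kernels are integrable against the compactly supported $\psi,\phi$. Uniform integrability then gives $\E\abs{[M,\bar{N}]_s-[N,\bar{N}]_s}_{\mathrm{F}}\to0$, and Markov's inequality yields the claimed convergence in probability. I expect the main obstacle to be the bookkeeping in the expansion: unlike the previous proposition, only one of the two potentials is post-smoothed, so I must check that \cref{eq:sigmaconcinprob} still applies with a nontrivial deterministic weight $f$ and with the shifted, $\rho$-dependent Gaussian center and radius, and that the mild singularity of $\Phi_r$ as $\tau_2\downarrow r$ is harmless---both of which are absorbed by the same integrability used for the uniform bound.
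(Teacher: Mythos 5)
Your proposal is correct and follows essentially the same route as the paper: polarize the Itô integrals against the shared noise $\dn W$ to get the cross-variation \cref{eq:MNbarQV}, expand $(\m{G}_\rho\Psi_r)\Phi_r$ via the product-of-Gaussians identity, apply \cref{eq:sigmaconcinprob} with $f=\Jbar_1\circ\bar{v}_r$, and conclude by uniform integrability from \cref{eq:moment} and Markov's inequality. The details you flag as potential obstacles (the asymmetric $+\rho$ smoothing and the bounded deterministic weight) are handled in the paper exactly as you anticipate.
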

\begin{proof}
  Using \cref{eq:dMs} and its analogues for $\bar{N}_t$ and $\overline{N}_{t}$, we compute
  \begin{equation}
    [M,\overline{N}]_{s} = \int_{0}^{s} \!\!\! \int_{\R^2} (\m{G}_{\rho}\Psi_{r})(x)\Phi_{r}(x)(\sigma \circ v_r)(x) (\Jbar_{1} \circ \bar{v}_r)(x) \d x\ds r\label{eq:MNbarQV}
  \end{equation}
  and
  \begin{equation*}
    [N,\overline{N}]_{s} = \int_{0}^{s}\!\!\!\int_{\R^2} \Psi_{r}(x)\Phi_{r}(x) (\Jbar_{1} \circ \bar{v}_r)(x)^2 \d x \ds r.
  \end{equation*}
  We can expand
  \begin{equation*}
    \Psi_{r}(x)\Phi_{r}(x) = \int_{[r,t]^2 \times \R^4} G_{\tau_{1}-r}(x-y_{1})G_{\tau_{2}-r}(x-y_{2})\psi_{\tau_{1}}(y_{1})\phi_{\tau_{2}}(y_{2}) \d y_1 \ds y_2 \ds \tau_1 \ds \tau_2
  \end{equation*}
  and
  \begin{equation*}
    (\m{G}_{\rho}\Psi_{r})(x)\Phi_{r}(x) = \int_{[r,t]^2 \times \R^4} G_{\tau_{1}-r+\rho}(x-y_{1})G_{\tau_{2}-r}(x-y_{2})\psi_{\tau_{1}}(y_{1})\phi_{\tau_{2}}(y_{2}) \d y_1 \ds y_2 \ds \tau_1 \ds \tau_2.
  \end{equation*}
  We can thus develop \cref{eq:MNbarQV} as
  \begin{equation}
    \begin{aligned}[]
      \hspace*{-2mm}[M,&\overline{N}]_{s}= \int_{\R^2} \dn x \int_{0}^{s} \!\dn r\int_{[r,t]^2} \dn \tau_1 \ds \tau_2 \int_{\R^4} \dn y_1 \ds y_2 \; \psi_{\tau_{1}}(y_{1})\psi_{\tau_{2}}(y_{2})G_{\tau_{1}+\tau_{2}-2r+\rho}(y_{1}-y_{2})\\
                            &\hspace{3mm}\times\m{G}_{[(\tau_{1}-r+\rho)^{-1}+(\tau_{2}-r)^{-1}]^{-1}}[(\sigma\circ v_{r})(\Jbar_{1}\circ\overline{v}_{r})]\left(\frac{(\tau_{2}-r)y_{1}+(\tau_{1}-r+\rho)y_{2}}{\tau_{1}+\tau_{2}-2r+\rho}-x\right),
    \end{aligned}
    \label{eq:MNbarexpansion}
  \end{equation}
  and similarly
  \begin{equation}
    \begin{aligned}[]
      [N,\overline{N}]_{s}= \int_{\R^2} &\dn x \int_{0}^{s} \!\dn r\int_{[r,t]^2} \dn \tau_1 \ds \tau_2 \int_{\R^4} \dn y_1 \ds y_2 \; \psi_{\tau_{1}}(y_{1})\psi_{\tau_{2}}(y_{2})G_{\tau_{1}+\tau_{2}-2r}(y_{1}-y_{2})\\
                            &\times\m{G}_{[(\tau_{1}-r+\rho)^{-1}+(\tau_{2}-r)^{-1}]^{-1}}\big[(\Jbar_{1}\circ\overline{v}_{r})^{2}\big]\left(\frac{(\tau_{2}-r)y_{1}+(\tau_{1}-r)y_{2}}{\tau_{1}+\tau_{2}-2r}-x\right).
    \end{aligned}
    \label{eq:NNbarexpansion}
  \end{equation}
  By \cref{eq:sigmaconcinprob} with $f=\Jbar_{1}\circ\overline{v}_{r}$, the second line of \cref{eq:MNbarexpansion} converges to the second line of \cref{eq:NNbarexpansion} in probability as $\rho \to 0$.
  Uniform integrability from \cref{eq:moment} implies that 
  \begin{equation*}
    \E\absb{[M,\overline{N}]_{s}-[N,\overline{N}]_{s}}_{\mathrm{F}}\to0\qquad\text{as }\rho\downarrow0,
  \end{equation*}
  and Markov's inequality completes the proof.
\end{proof}

\providecommand{\bysame}{\leavevmode\hbox to3em{\hrulefill}\thinspace}
\providecommand{\MR}{\relax\ifhmode\unskip\space\fi MR }
\providecommand{\MRhref}[2]{%
  \href{http://www.ams.org/mathscinet-getitem?mr=#1}{#2}
}
\providecommand{\href}[2]{#2}

\begin{acks}
CG was supported by the NSF Mathematical Sciences Postdoctoral Research Fellowship program under grant DMS-2103383.
\end{acks}
\end{document}